\newtheorem{thm}{Theorem}[section]
\newtheorem{lem}[thm]{Lemma}
\newtheorem{cor}[thm]{Corollary}
\newtheorem{defi}[thm]{Definition}
\newtheorem*{thmA}{Theorem A}
\newtheorem*{thmB}{Theorem B}
\newtheorem*{thm*}{Theorem}
\theoremstyle{definition}
\newtheorem{ex}[thm]{Example}
\numberwithin{equation}{section}
\def\irr#1{{\Irr}(#1)}
\def\oh#1#2{{\bf O}_{#1}(#2)}
\def\syl#1#2{{\rm Syl}_{#1}(#2)}
\def\nor{\triangleleft\,}
\def\norm#1#2{{\bf N}_{#1}(#2)}
\def\cent#1#2{{\bf C}_{#1}(#2)}
\def\aut#1{{\rm Aut}(#1)}
\let\phi=\varphi
\def\C{\mathbb C}
\def\Z{\mathbb Z}
\def\N{\mathbb N}
\newcommand{\Irr}{\operatorname{Irr}}
\newcommand{\height}{\operatorname{ht}}
\newcommand{\ch}{\operatorname{ch}}
\newcommand{\fS}{{\mathfrak{S}}}
\newcommand{\fA}{{\mathfrak{A}}}
\begin{document}

\author{Eugenio Giannelli}
\address[E. Giannelli]{Dipartimento di Matematica e Informatica U.~Dini, Viale Morgagni 67/a, Firenze, Italy}
\email{eugenio.giannelli@unifi.it}

\author{Stacey Law}
\address[S. Law]{Department of Pure Mathematics and Mathematical Statistics, University of Cambridge, Cambridge CB3 0WB, UK}
\email{swcl2@cam.ac.uk}

\author{Jason Long}
\address[J. Long]{Mathematical Institute, University of Oxford, Andrew Wiles Building, Radcliffe Observatory Quarter, Woodstock Road, Oxford OX2 6GG, UK} 
\email{jasonlong272@gmail.com}

\author{Carolina Vallejo}
\address[C. Vallejo]{Departamento de Matem\'aticas, Edificio Sabatini, Universidad Carlos III de Madrid,
Avenida Universidad 30, 28911, Legan\'es. Madrid, Spain}
\email{carolina.vallejo@uc3m.es}
\thanks{}

\title[]{Sylow branching coefficients and a conjecture of Malle and Navarro}


\begin{abstract} 
	We prove that a finite group $G$ has a normal Sylow $p$-subgroup $P$ if, and only if, every irreducible character of $G$ appearing in the permutation character $({\bf 1}_P)^G$ with multiplicity coprime to $p$ has degree coprime to $p$. 
	This confirms a prediction by Malle and Navarro from 2012. Our proof of the above result depends on a reduction to simple groups and ultimately on a combinatorial analysis of the properties of Sylow branching coefficients for symmetric groups.
\end{abstract}

\keywords{Sylow subgroups, Character degrees, Vanishing elements \and Sylow branching coefficients}
\subjclass[2010]{20C15;20C20;20C30;20C33}

\maketitle


\section{Introduction}

\noindent One of the main research themes in the representation theory of finite groups is to determine how much information about the algebraic structure of a finite group $G$ can be discovered using knowledge of its character degrees. A famous result in this line of investigation is the It\^o-Michler theorem \cite{Ito51,Michler} asserting that a Sylow $p$-subgroup $P$ of $G$ is abelian and normal in $G$ if, and only if, the character degree $\chi(1)$ is coprime to $p$ for every irreducible character $\chi\in\irr G$. 
Separating the two conditions (abelian and normal) on the Sylow $p$-subgroup $P$ in the context of character degrees has been a challenge for the last few decades. While the commutativity of $P$ is characterized by Brauer's height zero conjecture \cite{MN21}, the aim of this article is to study canonical subsets of characters whose degrees characterize the normality of $P$ in $G$. 

In \cite{MN12} Malle and Navarro showed that given a prime $p$ and a Sylow $p$-subgroup $P$ of $G$, then $P$ is normal in $G$ if, and only if, every irreducible constituent of the permutation character $({\bf 1}_P)^G$ has degree coprime to $p$. At the end of their article they conjecture a refinement of this result, proposing that the normality of $P$ may be detected by looking at an even smaller subset of irreducible characters of $G$, namely those irreducible constituents of $({\bf 1}_P)^G$ appearing with multiplicity coprime to $p$. 
In this article we verify Malle and Navarro's prediction.

\begin{thmA} Let $G$ be a finite group, $p$ be a prime and $P\in \syl p G$. The following statements are equivalent:
\begin{enumerate}
\item[{\rm (i)}] $P$ is normal in $G$.
\item[{\rm (ii)}] Every $\chi \in \irr G$ with $[\chi_P, {\bf 1}_P]$ not divisible by $p$ has degree coprime to $p$.
\item[{\rm (iii)}] Every $\chi \in \irr G$ with $[\chi_P, {\bf 1}_P]$ not divisible by $p$ does not vanish in $P$.
\end{enumerate}
\end{thmA}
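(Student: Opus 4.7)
The implications $(i)\Rightarrow(ii)$ and $(i)\Rightarrow(iii)$ are immediate: if $P\nor G$, then Frobenius reciprocity gives $P\leq\ker\chi$ for every irreducible constituent $\chi$ of $({\bf 1}_P)^G$, so $\chi(1)$ divides $|G:P|$ and is therefore coprime to $p$, while $\chi_P=\chi(1){\bf 1}_P$ has no zero on $P$.

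For the two converses, the plan is to argue the contrapositive by induction on $|G|$: if $P$ is not normal in $G$, then one can exhibit a single $\chi\in\irr{G}$ with $[\chi_P,{\bf 1}_P]\not\equiv 0\pmod p$, $p\mid\chi(1)$, and $\chi(x)=0$ for some $x\in P$. Such a witness defeats both (ii) and (iii) at once, so a single induction suffices. The first step is a Clifford-theoretic reduction to the almost simple case: given a minimal normal subgroup $N$ of $G$, analyse separately the cases where $N$ is a $p$-group, a $p'$-group, or a direct product of nonabelian simple groups, using inflation and induction/restriction to transport candidate characters between $G/N$, $N$, and $G$. This elaborates the reduction carried out in \cite{MN12} for the weaker Malle--Navarro theorem, but requires finer bookkeeping, since the multiplicity $[\chi_P,{\bf 1}_P]$ modulo $p$, the degree $\chi(1)$ modulo $p$, and the existence of a vanishing element in $P$ must all be tracked simultaneously.

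After the reduction, one must verify the statement for each family of finite simple groups via the classification. Sporadic groups should be handled by direct inspection of their character tables, and groups of Lie type are expected to be tractable via Deligne--Lusztig theory together with the Harish-Chandra/Gelfand--Graev description of the constituents of $({\bf 1}_P)^G$. The main obstacle, which gives the article its title, is the case $G\in\{\fS_n,\fA_n\}$: here $[\chi_P,{\bf 1}_P]$ is precisely the Sylow branching coefficient of $\chi$. The plan is to exhibit, for each $n\geq p$, a partition $\lambda\vdash n$ such that (a) the Sylow branching coefficient of $\chi^\lambda$ is coprime to $p$, (b) $p\mid\chi^\lambda(1)$, controlled via the hook length formula or equivalently the $p$-core of $\lambda$, and (c) $\chi^\lambda$ has a zero at a prescribed element of $P$, produced via the Murnaghan--Nakayama rule.

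The difficult condition is (a): controlling the branching coefficient modulo $p$. This is expected to require a recursive argument exploiting the wreath product structure of the Sylow $p$-subgroups of $\fS_n$ together with the base-$p$ expansion of $n$, and building on earlier work of the first two authors on $({\bf 1}_P)^{\fS_n}$ modulo $p$. The alternating case will then follow from the symmetric one by Clifford theory.
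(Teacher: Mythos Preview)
Your outline of the reduction is broadly correct and close to the paper's, but you overcomplicate the simple-group step, omit one requirement in the reduction, and the technique you propose for $\fS_n$ is not the one that actually works here.

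First, for groups of Lie type you do not need Deligne--Lusztig theory. By Granville--Ono, every finite simple group has a $p$-defect zero character except for a short explicit list: certain sporadic groups at $p\in\{2,3\}$, and alternating groups $\fA_n$ for specific $n$ at $p\in\{2,3\}$. A $p$-defect zero character $\theta$ automatically has $[\theta_R,{\bf 1}_R]=\theta(1)/|R|$ coprime to $p$ and vanishes on every nontrivial element of $R$, so it is already the desired witness. Thus after the reduction only a handful of sporadic exceptions remain (handled by direct computation of $({\bf 1}_R)^S$ in GAP) together with $\fA_n$ at $p\in\{2,3\}$; there is no need to treat all primes for alternating groups, nor any Lie-type work.

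Second, in the reduction you must ensure that the character produced on the simple factor $S$ is $\aut S$-invariant (when $S$ has no $p$-defect zero character). This is what guarantees that the tensor product $\eta=\theta^{\times r}$ on $N=S^r$ is $G$-invariant, so that $\chi_N$ is a scalar multiple of $\eta$ and the chosen zero of $\eta$ in $P\cap N$ is inherited by $\chi$. Without invariance, $\chi_N$ is only a sum over a $G$-orbit of $\eta$ and the vanishing argument breaks down. You should track this explicitly.

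Third, for $\fS_n$ and $\fA_n$ the paper does not argue recursively through the wreath-product structure of $P_n$. Instead, for each $p$-block $B=B(\gamma,w)$ it introduces a virtual character
\[
V^B \;=\; V^\gamma[p^{n_1},\dotsc,p^{n_u}]
\]
obtained by successively adding $p^{n_i}$-hooks to the $p$-core $\gamma$, where the $p^{n_i}$ are the parts of the $p$-adic expansion of $pw$ with multiplicity. Using the symmetric-function identity $\ch(V^\gamma[e])=s_\gamma\cdot p_e$, one evaluates $V^B$ on $P_n$ exactly: it is supported on a single cycle type, and a short $p$-adic valuation count (using that $\nu_p(|P_n\cap\sigma^{\fS_n}|)=\nu_p(|\sigma^{\fS_n}|)$) shows $[(V^B)_{P_n},{\bf 1}_{P_n}]$ is coprime to $p$. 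Every constituent $\chi^\lambda$ of $V^B$ has height zero in $B$, so some such $\chi^\lambda$ has $Z^\lambda\not\equiv 0\pmod p$. Choosing $\gamma$ with $|\gamma|\ge p$ forces $p\mid\chi^\lambda(1)$, and a separate Murnaghan--Nakayama argument on a ``$p$-adic'' element shows that in $\fS_n$ and $\fA_n$ the condition $p\mid\chi(1)$ is equivalent to $\chi$ vanishing on some element of $P$. This virtual-character trick replaces your proposed recursion and in fact yields the stronger block-by-block statement (Theorem~B).
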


In Section 2, we show that in order to prove Theorem A it is enough to prove the validity of its statement for all finite simple non-abelian groups. Roughly speaking, for every finite non-abelian simple group $S$ we must exhibit
an irreducible character $\chi$ of degree divisible by $p$ with {\it trivial Sylow branching coefficient} coprime to $p$. If $P$ is a Sylow $p$-subgroup of $S$, then the trivial Sylow branching coefficient of $\chi$ is the multiplicity $[\chi_P, {\bf 1}_P]$ with which $\chi$ appears as a constituent of the permutation character $({\bf 1}_P)^S$. (We refer the reader to Section \ref{sec:311} for more information on Sylow branching coefficients.) The main obstacle in this context comes from simple alternating groups (as already observed in \cite[p.4]{MN12} and recently remarked by the same authors in \cite{MN2}). We prove the statement of Theorem A for simple alternating groups as a consequence of the following much more general statement, concerning symmetric and alternating groups,  $\fS_n$ and $\fA_n$, at all primes.

\begin{thmB}
	Let $p$ be a prime and $n\in\N$. Let $G\in\{\fS_n, \fA_n\}$, let $P$ be a Sylow $p$-subgroup of $G$ and let $B$ be a $p$-block of $G$. Then there exists an irreducible character $\chi$ of height zero in $B$ such that $[\chi_P, {\bf 1}_P]$ is not divisible by $p$.
\end{thmB}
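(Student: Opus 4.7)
The plan is to reduce first to the case $G=\fS_n$, and then, for each $p$-block $B$ of $\fS_n$, to explicitly construct a height zero character $\chi$ in $B$ whose $p$-quotient is chosen so that $[\chi_P,{\bf 1}_P]$ can be computed modulo $p$.

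To reduce from $\fA_n$ to $\fS_n$, I would use Clifford theory applied to the normal inclusion $\fA_n\nor\fS_n$. Every $p$-block $B$ of $\fA_n$ is covered by some $p$-block $\tilde B$ of $\fS_n$. For odd $p$, Sylow $p$-subgroups of $\fA_n$ and $\fS_n$ coincide; for $p=2$ one has $P=\tilde P\cap\fA_n$ with $|\tilde P:P|=2$ for some $\tilde P\in\syl 2{\fS_n}$. Starting from a height zero character $\tilde\chi\in\tilde B$ with $[\tilde\chi_{\tilde P},{\bf 1}_{\tilde P}]$ coprime to $p$, I would show that some irreducible constituent $\chi$ of $\tilde\chi_{\fA_n}$ lies in $B$, has height zero in $B$, and satisfies $[\chi_P,{\bf 1}_P]\not\equiv 0\pmod{p}$, by tracking the branching through the chain $P\leq\tilde P$ using Frobenius reciprocity together with standard facts on blocks covering blocks of normal subgroups of index a $p$-power.

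For $G=\fS_n$, by the Nakayama conjecture the $p$-block $B$ is determined by a pair $(\gamma,w)$, where $\gamma$ is a $p$-core partition and $n=|\gamma|+wp$. The irreducible characters of $B$ are the $\chi^\mu$ indexed by partitions $\mu$ of $n$ with $p$-core $\gamma$, and both the height of $\chi^\mu$ in $B$ and the trivial Sylow branching coefficient $[\chi^\mu_P,{\bf 1}_P]$ are controlled by the $p$-quotient $(\mu^{(0)},\ldots,\mu^{(p-1)})$ of $\mu$. A Sylow $p$-subgroup of $\fS_n$ factorises as a direct product of iterated wreath products according to the base-$p$ expansion $n=\sum a_ip^i$, and this factorisation induces a matching recursive decomposition of $[\chi^\mu_P,{\bf 1}_P]$. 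I would proceed by induction on the weight $w$: starting from $\gamma$ and using the hook removal/addition machinery, I would build $\mu$ by adding $p$-hooks so that the resulting partition has (i) prescribed $p$-core $\gamma$, (ii) height zero in $B$ (which forces each piece $\mu^{(i)}$ of the $p$-quotient to have a prescribed partition shape dictated by the maximal $p$-adic valuation of the hook product), and (iii) trivial Sylow branching coefficient coprime to $p$, the last controlled via the recursive formula developed in earlier joint work of the authors on Sylow branching coefficients for $\fS_n$.

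The main obstacle is the simultaneous control of the three constraints: the height-zero condition imposes rigid shape constraints on each component $\mu^{(i)}$, while $p\nmid[\chi^\mu_P,{\bf 1}_P]$ favours $p$-quotients built from specific hook-type partitions aligned with the base-$p$ digits of $n$. Reconciling these for every component of the $p$-quotient and at every digit of the base-$p$ expansion — and carefully treating the small cases needed as the base of the induction, together with the anomalies arising at $p=2$ both in the reduction from $\fA_n$ to $\fS_n$ and in the structure of the wreath product Sylow subgroups — is where I expect the combinatorial heart of the argument to lie. A natural fallback, if a single $\mu$ cannot be pinned down uniformly, is to split into cases according to the base-$p$ digits of $w$ and of $|\gamma|$, exhibiting a different explicit $\mu$ in each case and verifying the branching condition by direct application of the recursive formula.
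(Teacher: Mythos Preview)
Your reduction from $\fA_n$ to $\fS_n$ has a genuine gap at $p=2$. Suppose you have found $\tilde\chi=\chi^\lambda\in\Irr_0(\tilde B)$ with $[\chi^\lambda_{\tilde P},{\bf 1}_{\tilde P}]$ odd. For $\lambda\ne\lambda'$ the unique constituent of $(\chi^\lambda)_{\fA_n}$ is $\chi$, and by Frobenius reciprocity
\[
[\chi_P,{\bf 1}_P]=[\chi^\lambda_{\tilde P},({\bf 1}_P)^{\tilde P}]=[\chi^\lambda_{\tilde P},{\bf 1}_{\tilde P}]+[\chi^{\lambda'}_{\tilde P},{\bf 1}_{\tilde P}].
\]
Knowing that one summand is odd does not make the sum odd; you need $Z^\lambda+Z^{\lambda'}$ odd, and nothing in your plan controls $Z^{\lambda'}$. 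The paper handles this by never isolating a single $\chi^\lambda$: it works with a virtual character $V^{\bar B}$ which, because the $2$-core $\gamma$ is self-conjugate, is automatically built from pairs $\pm(\chi^\lambda+\chi^{\lambda'})$ and $(\chi^\lambda-\chi^{\lambda'})$, so the oddness of $[V^{\bar B}_{\tilde P},{\bf 1}_{\tilde P}]$ forces some pair-sum $Z^\lambda+Z^{\lambda'}$ to be odd.

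More fundamentally, for $\fS_n$ the paper does \emph{not} attempt to construct an explicit $\mu$ and compute $Z^\mu$ via recursive branching formulas; it sidesteps precisely the obstacle you flag. The key idea is the virtual character
\[
V^\gamma[e_1,\dots,e_u]=\sum \pm\,\chi^\alpha
\]
obtained from $\gamma$ by successively adding hooks of sizes $e_1,\dots,e_u$ (the $p$-powers in the $p$-adic expansion of $wp$), with signs given by leg lengths. All constituents are height zero in $B(\gamma,w)$ by iterated hook addition. Via the symmetric-function identity $s_\gamma p_{e_1}\cdots p_{e_u}=\ch(V^\gamma[e_1,\dots,e_u])$, this virtual character vanishes on every $p$-element of $\fS_n$ except those of cycle type $(e_1,\dots,e_u,1^{|\gamma|})$, where its value is $\chi^\gamma(1)\cdot|\cent{\fS_n}{\sigma}|/|\gamma|!$. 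The computation of $[V^B_{P_n},{\bf 1}_{P_n}]$ then reduces to a single conjugacy-class count, and one checks it is a $p'$-number using $\nu_p(|P_n\cap\sigma^{\fS_n}|)=\nu_p(|\sigma^{\fS_n}|)$. Existence of a good $\chi$ follows. Your proposed inductive construction of an explicit $\mu$ with controlled $p$-quotient is a different and substantially harder route; the recursive Sylow-branching formulas in the earlier work do not give $p$-adic information of the required precision, which is exactly why the paper introduces the virtual-character machinery.
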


As well as providing the key ingredient for proving Theorem A (note that the character $\chi$ provided by Theorem B has degree divisible by $p$ whenever $B$ has non-maximal defect), Theorem B also contributes to
the study of Sylow branching coefficients for symmetric and alternating groups and, more generally, to the study of the restriction of characters to Sylow subgroups.
These topics have recently been at the centre of investigation for their connections to the McKay Conjecture \cite{NTV, GKNT, INOT, GN}. In \cite{GL1} the authors determine those irreducible characters of $\fS_n$ having non-zero trivial Sylow branching coefficient.
Despite this positivity result, very little is known about the values of these integers. 
In this sense, Theorem B represents a first step towards a more precise description of these Sylow branching coefficients.

The key idea behind the proof of Theorem B is the following. For any given $p$-block $B$ of $\fS_n$ we introduce a virtual character $V^{B}$, obtained as a certain integer combination of the irreducible characters of height zero in $B$ (see Definition \ref{def: VirtualCharBlock}). Using the language of symmetric functions together with algebraic-combinatorial techniques, we then show that $p$ does not divide the multiplicity $[(V^{B})_P, {\bf 1}_P]$, and hence deduce that $p$ does not divide the Sylow branching coefficient corresponding to one of the height zero characters occurring in $V^{B}$. 

It is worth mentioning that the virtual characters $V^B$, and more generally the family of virtual characters introduced in Section~\ref{sec:coprime} below, seem to have further applications to problems involving signed character sums in symmetric and alternating groups (see \cite[p.2]{IN08} and \cite[Section 6]{N10}); this will be the subject of future investigation.

The structure of this article is as follows. In Section~\ref{sec:MN} we prove Theorem A, assuming that its statement holds for simple alternating groups. 
In Section~\ref{sec:coprime} we investigate symmetric and alternating groups specifically, in particular showing that Theorem A holds for these classes of groups as a consequence of the more general Theorem B.

\medskip

\section{A reduction to alternating simple groups}\label{sec:MN} 
\noindent The aim of this section is to prove Theorem A.
%
%
%
We mimic and adapt the approach used in \cite{MN12}: we reduce the problem to showing that every finite non-abelian simple group $S$
possesses a suitable character lying over the trivial character of a Sylow $p$-subgroup of $S$ with multiplicity not divisible by $p$ and
vanishing on some element of the aforementioned Sylow $p$-subgroup.

\medskip

We follow the notation of \cite{Isa06} and \cite{Nav18} for characters. Let $G$ be a finite group and $p$ be a prime. Recall that $\chi \in \irr G$ has {\em $p$-defect zero} if the $p$-part of its degree $\chi(1)$ is as large as possible, that is, if $p$ does not divide $|G|/\chi(1)$.
Note that a non-trivial group with a normal Sylow $p$-subgroup $P$ does not possess a $p$-defect zero character unless 
$P=1$. 

\begin{lem}\label{lem:defectzero} 
	Suppose that $\chi \in \irr G$ has $p$-defect zero and let $P\in \syl p G$. Then
	\begin{enumerate}
	\item[{\rm (i)}] $\chi(x)=0$ for every non-trivial $x \in P$.
	\item[{\rm (ii)}] $\chi_P = f\cdot\rho_P$ where $f$ is coprime to $p$ and $\rho_P$ is the regular character of $P$.
	\end{enumerate}
\end{lem}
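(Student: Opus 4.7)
The plan is to prove (i) first, as a consequence of a classical result about $p$-defect zero characters, and then deduce (ii) by comparing $\chi_P$ with the regular character of $P$.

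For (i), I would invoke the classical theorem (originally due to Brauer; see, e.g., Isaacs' \emph{Character Theory of Finite Groups}) stating that every $\chi \in \irr G$ of $p$-defect zero vanishes on every $p$-singular element of $G$, i.e.\ on every element whose order is divisible by $p$. Since every non-trivial $x \in P$ has order a positive power of $p$, this immediately gives $\chi(x) = 0$. If one wanted a self-contained argument, the standard proof proceeds via the central character: for the conjugacy class $K$ of a $p$-singular $x$, the value $\omega_\chi(\widehat K) = |K|\chi(x)/\chi(1)$ is an algebraic integer, and combining $\chi(1)_p = |G|_p$ (so that $|G|/\chi(1)$ is a $p$-adic unit) with $p \mid |\cent G x|$ forces $\chi(x)$ to have unboundedly large $p$-adic valuation in a suitable ring of algebraic integers, hence to be zero.

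For (ii), by (i) the restriction $\chi_P$ is a class function on $P$ supported only at the identity, so $\chi_P = f \cdot \rho_P$ for some scalar $f$, since $\rho_P$ is (up to scalar) the unique such class function on $P$. Evaluating at $1$ yields $f = \chi(1)/|P|$, which is a positive integer because $\chi_P$ has non-negative integer multiplicities as a character of $P$. The $p$-defect zero hypothesis $\chi(1)_p = |G|_p = |P|$ is then precisely the statement that $f = \chi(1)/|P|$ is coprime to $p$.

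The only non-routine input in the lemma is the classical vanishing theorem for defect zero characters; once that is in hand, both parts follow in a few lines, and this is the step where all of the real content sits. One could alternatively prove (ii) first by a direct central character computation and derive (i) as a trivial corollary, but in either organisation the key ingredient is the same standard block-theoretic fact.
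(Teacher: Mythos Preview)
Your proof is correct and follows essentially the same approach as the paper: part (i) is cited as the classical vanishing theorem for $p$-defect zero characters (Isaacs, Theorem 8.17), and part (ii) is deduced from (i) by observing that $\chi_P$ is supported at the identity, so $[\chi_P,{\bf 1}_P]=\chi(1)/|P|=f$ is a positive integer coprime to $p$ and $\chi_P=f\cdot\rho_P$. The only cosmetic difference is that the paper computes $[\chi_P,{\bf 1}_P]$ directly rather than first noting $\chi_P$ is a scalar multiple of $\rho_P$, but the content is identical.
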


\begin{proof}
	Part (i) is \cite[Theorem 8.17]{Isa06} (also \cite[Theorem 4.6]{Nav18}). By (i), we have that
	 $$[\chi_P, {\bf 1}_P]=\frac{1}{|P|}\sum_{x \in P} \chi(x)=\frac{\chi(1)}{|P|}=f$$ is a positive integer. Then $\chi_P=f\cdot\rho_P$ and part (ii) follows.
\end{proof}
 
The next lemma follows from a standard argument. 
 
\begin{lem}\label{lem:nonvanishing2} 
	Let $G$ be a finite group, $p$ be a prime and $P\in \syl p G$.
	Let $\chi \in \irr G$ have degree coprime to $p$. Then $\chi(x)\neq 0$ for every $x \in P$.
\end{lem}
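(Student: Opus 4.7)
The plan is a short argument by contradiction using cyclotomic integers. Suppose for contradiction that $\chi(x)=0$ for some $x\in P$. Since $\chi(1)$ is a positive integer coprime to $p$, the element $x$ must be non-trivial, and its order is a power of $p$, say $n=p^k$ with $k\geq 1$. Let $\zeta$ be a primitive complex $n$-th root of unity.

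The heart of the argument is a congruence in the ring $\Z[\zeta]$. Fix a matrix representation affording $\chi$; then $\chi(x)$ is the sum of the $\chi(1)$ eigenvalues of the image of $x$, each of which is a power of $\zeta$. The key elementary identity is
\[ 1-\zeta^j = (1-\zeta)(1+\zeta+\cdots+\zeta^{j-1}) \in (1-\zeta)\Z[\zeta] \qquad (j \in \Z), \]
which shows that every $n$-th root of unity is congruent to $1$ modulo $1-\zeta$. Summing the eigenvalues then yields the congruence $\chi(x) \equiv \chi(1) \pmod{1-\zeta}$ in $\Z[\zeta]$, and the assumption $\chi(x)=0$ forces $(1-\zeta) \mid \chi(1)$ in $\Z[\zeta]$.

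To finish I would descend back to $\Z$ by applying the field norm from $\Q(\zeta)$ to $\Q$. Using the standard evaluation $N_{\Q(\zeta)/\Q}(1-\zeta) = \Phi_{p^k}(1) = p$, the divisibility $(1-\zeta) \mid \chi(1)$ in $\Z[\zeta]$ gives $p \mid \chi(1)^{\varphi(n)}$ in $\Z$, whence $p \mid \chi(1)$, contradicting the hypothesis that $\chi(1)$ is coprime to $p$.

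There is essentially no genuine obstacle here, which is precisely why the authors label the lemma as following from a \emph{standard argument}. The only points requiring a little attention are disposing of the trivial case $x=1$ separately and being explicit about the passage from $\Z[\zeta]$ down to $\Z$ via the norm, so as to convert a divisibility of algebraic integers into a divisibility of rational integers.
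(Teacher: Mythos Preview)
Your proof is correct and follows essentially the same approach as the paper. Both arguments establish the congruence $\chi(x)\equiv\chi(1)$ modulo a prime lying over $p$ (the paper uses a maximal ideal $\mathcal{M}$ of the full ring of algebraic integers containing $p$, while you use the element $1-\zeta$ in $\Z[\zeta]$), and then descend to $\Z$ (the paper via $\mathcal{M}\cap\Z=p\Z$, you via the norm map); these are standard equivalent formulations of the same idea.
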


\begin{proof} 
	Consider the ring $\bf R$ of algebraic integers of $\C$. Let $\mathcal M$ be a maximal ideal of $\bf R$ containing $p\bf R$.
	Let $\xi \in \bf R$ be a root of unity of order $p^a$. Then $(\xi-1)^{p^a}\equiv 0 \mod \mathcal M$. Since $\bf R/\mathcal M$ is a field we obtain that $\xi \equiv 1 \mod \mathcal M$. Given $x \in P$, write $Q=\langle x \rangle$. Since $\chi_Q=\lambda_1 +\cdots +\lambda_{\chi(1)}$ where $\lambda_ j \in \irr Q$, then $\chi(x)$ is a sum of $\chi(1)$ roots of $p$-power order and 
	$$\chi(x)\equiv \chi(1) \mod \mathcal M\, .$$
	In particular, $\chi(x)\neq 0$ as $\mathcal M\cap \mathbb Z=p \mathbb Z$.
\end{proof}

It is a well-known result of Burnside that every non-linear character of a finite group vanishes on some element (\cite[Corollary 4.2]{Nav18}). Hence the converse of the above lemma holds in $p$-groups. However, it is not the case in general that the non-vanishing property on Sylow $p$-subgroups characterises characters of degree coprime to $p$, as shown by ${\rm SL}_2(5)$ for $p=2$. 
A key ingredient in the proof of Theorem A will be that symmetric and alternating groups satisfy the converse of 
Lemma \ref{lem:nonvanishing2} (see Theorem~\ref{thm:nonvanishing} below).

\medskip

We will prove Theorem A of the introduction using the following result on finite non-abelian simple groups.

\begin{thm}\label{thm:simplegroups} 
	Let $S$ be a finite non-abelian simple group of order divisible by a prime $p$, and let $R\in \syl p S$. Then $S$ either possesses a $p$-defect zero character or there is some  $\aut S$-invariant $\theta \in \irr S$ such that $[\theta_R, {\bf 1}_R]$ is not divisible by $p$ and $\theta(x)=0$ for some $x \in R$.
\end{thm}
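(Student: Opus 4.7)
The plan is to proceed via the classification of finite simple groups, with the bulk of the work reserved for the alternating family, as the section's title suggests.

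For the sporadic simple groups (together with the Tits group), I would verify the assertion directly from the ATLAS, in each case locating either a $p$-defect zero character or an $\aut{S}$-invariant constituent of $({\bf 1}_R)^S$ satisfying the multiplicity and vanishing conditions. For groups of Lie type, when $p$ equals the defining characteristic the Steinberg character has degree $|R|$ and is therefore of $p$-defect zero. When $p$ is non-defining, I would appeal to the classification of finite simple groups admitting $p$-defect zero characters (as surveyed in \cite{MN12}), which handles all but a short list of small cases; those exceptions can be dispatched by exhibiting an explicit $\aut{S}$-invariant unipotent or cuspidal character and reading off its values on $R$.

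The principal case, and the one requiring new ideas, is $S=\fA_n$. If $\fA_n$ has a $p$-defect zero character, we are immediately in the first alternative; by Granville--Ono this always happens for $p\geq 5$, so we are reduced to $p\in\{2,3\}$ and those $n$ for which no $p$-core partition of $n$ exists. In this regime I would select a $p$-block $B$ of $\fA_n$ of non-maximal defect (which exists once $n$ is large enough relative to $p$; the remaining small pairs $(n,p)$ are dealt with by hand) and invoke Theorem~B to produce a height zero character $\chi\in\irr{\fA_n}\cap B$ with $[\chi_R,{\bf 1}_R]$ coprime to $p$. Because $B$ has non-maximal defect, $\chi(1)$ is divisible by $p$, and Theorem~\ref{thm:nonvanishing} then guarantees that $\chi$ vanishes on some element of $R$. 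To secure $\aut{\fA_n}$-invariance (so $\fS_n$-invariance for $n\neq 6$), I would choose $B$ to be labelled by a non-self-conjugate $p$-core, which forces $\chi$ to extend to $\fS_n$ and hence to be $\fS_n$-invariant; the case $n=6$ is checked directly.

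The hard part is precisely this last step for $\fA_n$ at $p\in\{2,3\}$: one must simultaneously arrange four constraints on a single character $\chi$ (positive height, coprime trivial Sylow branching coefficient, vanishing on some element of $R$, and $\aut{\fA_n}$-invariance). Theorem~B and Theorem~\ref{thm:nonvanishing} provide the core tools, but it is the flexibility within the construction of the virtual characters $V^B$ of Definition~\ref{def: VirtualCharBlock} that ultimately allows us to satisfy all four conditions at once.
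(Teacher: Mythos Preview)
Your overall plan matches the paper's: reduce via Granville--Ono \cite{GO96} to a finite list of sporadic groups (handled computationally) and to alternating groups at $p\in\{2,3\}$ (handled via Theorem~B and Theorem~\ref{thm:nonvanishing}, exactly as in Theorem~\ref{thm:An23}). The paper cites \cite{GO96} directly rather than splitting into Lie-type families and re-deriving the Steinberg case, but that difference is cosmetic.

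There is, however, a genuine gap in your treatment of $\fA_n$ at $p=2$. You propose to secure $\aut{\fA_n}$-invariance by choosing the block to be labelled by a \emph{non-self-conjugate} $p$-core; but every $2$-core is a staircase partition and hence self-conjugate, so for $p=2$ no such block exists. The paper's Theorem~\ref{thm:An23} repairs this as follows. One is forced to take a self-conjugate $2$-core $\gamma$ (of size $\ge 3$ to ensure non-maximal defect), and then Lemma~\ref{lem:ht0-not-conj} shows that every $\chi^\lambda\in\Irr_0\big(B(\gamma,w)\big)$ with $w>0$ nonetheless has $\lambda\ne\lambda'$. Even this is not quite enough: applying Theorem~B as a black box to $\fS_n$ yields a $\chi^\lambda$ with $Z^\lambda$ odd, whereas for $\theta=(\chi^\lambda)_{\fA_n}$ one needs $[\theta_R,{\bf 1}_R]=Z^\lambda+Z^{\lambda'}$ to be odd. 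The paper therefore returns to the virtual character $V^B$ itself: since $\gamma=\gamma'$, its irreducible constituents pair off as $\chi^\lambda,\chi^{\lambda'}$, and the oddness of $[(V^B)_{P_n},{\bf 1}_{P_n}]$ forces $Z^\lambda+Z^{\lambda'}$ to be odd for some such pair. Your closing remark about ``flexibility within the construction of $V^B$'' gestures in the right direction, but the concrete mechanism you describe (non-self-conjugate core) does not work at $p=2$, and the missing ingredients are precisely Lemma~\ref{lem:ht0-not-conj} together with this pairing argument inside $V^B$.
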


\begin{proof} 
	By \cite[Corollary 2]{GO96}, every finite non-abelian simple group possesses a $p$-defect zero character unless $p=2$ and $S$ is one of the sporadic groups ${\rm M}_{12}$, ${\rm M}_{22}$, ${\rm M}_{24}$, ${\rm J}_2$, ${\rm HS}$, ${\rm Suz}$, ${\rm Ru}$, ${\rm Co}_3$, ${\rm Co}_1$, ${\rm BM}$ or an alternating group $\fA_n$ with $7\leq n\neq 2m^2+m$, $2m^2+m+2$ for any integer $m$; or $p=3$ and $S$ is either ${\rm Suz}$, ${\rm Co}_3$ or $\fA_n$ with $3n + 1$ divisible by some prime $q$ congruent to $2\pmod 3$ to an odd power. 
	For $p=2$, let $S$ be a sporadic group not admitting a $p$-defect zero character. Using \cite{GAP} and the command \texttt{PermChars(CharacterTable("S"), d))}, we can compute
	the permutation characters $({\bf 1}_R)^S$ for 
	$S \in \{ {\rm M}_{12}, {\rm M}_{22}, {\rm M}_{24}, {\rm J}_2, {\rm HS}\}$, where the second argument \texttt{d} is the degree of the desired permutation character. For $S={\rm BM}$ the character $({\bf 1}_R)^S$ was computed by T. Breuer\footnote{ \tiny See \url{http://www.math.rwth-aachen.de/homes/sam/ctbllib/doc2/chap8_mj.html\#X87D11B097D95D027}.}.
	
	For $S \in \{ {\rm Suz},{\rm Ru}, {\rm Co}_1,{\rm Co}_3\}$, one can compute $({\bf 1}_R)^S$ by choosing a maximal subgroup $M$
	of $S$ containing $R$ with the \cite{GAP} command\linebreak \texttt{Maxes(CharacterTable("S"))},  computing $\theta:=({\bf 1}_R)^M$ with \texttt{PermChars}, and finally inducing $\theta$ to $S$. (In the case where $S={\rm Co}_1$, choose ${\rm Co}_3$ as a maximal subgroup.)
	One can proceed in a similar way to obtain $({\bf 1}_R)^S$ whenever $S \in \{ {\rm Suz}, {\rm Co}_3 \}$ and $p=3$. 
	(The function \texttt{PermChars} has several strategies to determine candidates for permutation characters, and the second argument determines which one is chosen. In the case where $S={\rm Co}_3$ and $p=3$, one should use \texttt{PermChars(CharacterTable("S"), rec(torso:=[d]))} where \texttt{d} is the degree of the desired permutation character.)

	Once we have $({\bf 1}_R)^S$ stored in \cite{GAP}, one can easily check that there is some $\aut S$-invariant $\theta \in \irr S$ with $[\theta_R, {\bf 1}_R]$ coprime to $p$ that vanishes on some $p$-power order element. 
	
	It remains to find $\theta$ for the alternating groups $\fA_n$ with $n\ge 5$ and $p\in\{2,3\}$. This is given by Theorem~\ref{thm:An23} below.
\end{proof}

The following technical lemma will be useful for proving Theorem A.

\begin{lem}\label{lem:defectzerocase} 
	Let $G$ be a finite group, $p$ be a prime and $P\in \syl p G$. Suppose that $N\nor G$ is such that $PN\nor G$ and $Q=P\cap N>1$.
	\begin{enumerate}
	\item[{\rm (a)}]  If $\tau \in \irr {PN}$ lies over ${\bf 1}_P$ with multiplicity coprime to $p$, then so does any $\chi \in \irr G$ lying over $\tau$.
	\item[{\rm (b)}] If $N$ has a $p$-defect zero character $\eta$, then $G$ has an irreducible character $\chi \neq {\bf 1}_G$ such that $p$ does not divide $[\chi_P, {\bf 1}_P]$ and $\chi$ vanishes on the non-trivial elements of $Q$.
	\end{enumerate}
\end{lem}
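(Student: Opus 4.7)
The plan for part (a) is to express $[\chi_P, \mathbf{1}_P]$ via Clifford theory applied to the normal subgroup $PN$ of $G$ and reduce it to a scalar multiple of $[\tau_P, \mathbf{1}_P]$. By Frobenius reciprocity combined with the transitivity of induction $\mathbf{1}_P^G = (\mathbf{1}_P^{PN})^G$, one has $[\chi_P, \mathbf{1}_P] = [\chi, \mathbf{1}_P^G] = [\chi_{PN}, \mathbf{1}_P^{PN}]$. Clifford's theorem writes $\chi_{PN} = e\sum_{i=1}^t \tau_i$, where $\tau_1,\dots,\tau_t$ are the $G$-conjugates of $\tau$, $t = [G:I_G(\tau)]$, and $e$ is the ramification index. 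The key observation is that $[(\tau_i)_P, \mathbf{1}_P] = [\tau_P, \mathbf{1}_P]$ for every $i$: any conjugate $P^g$ is a Sylow $p$-subgroup of the normal subgroup $PN$, hence $PN$-conjugate to $P$, and $\tau$ is a class function on $PN$. This yields $[\chi_P, \mathbf{1}_P] = et\,[\tau_P, \mathbf{1}_P]$. Finally, $et = \chi(1)/\tau(1)$ divides $[G:PN]$ (a standard consequence of Clifford theory; it follows, for instance, from Schur's bound on the degrees of irreducible projective representations of $I_G(\tau)/PN$), and $[G:PN]$ is coprime to $p$ since $P\le PN$ is Sylow in $G$.

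For part (b), I first note that $Q\in\syl p N$: from $|PN|_p = |P|$ and $|PN| = |P||N|/|Q|$ one reads off $|N|_p = |Q|$. By Lemma~\ref{lem:defectzero}(ii), $\eta_Q = f\rho_Q$ for some $f$ coprime to $p$. The idea is then to induce $\eta$ up to $PN$ and restrict back to $P$: Mackey's formula collapses because $PN = P\cdot N$ is a single $(P,N)$-double coset, giving
\[
(\eta^{PN})_P \,=\, (\eta_Q)^P \,=\, f(\rho_Q)^P \,=\, f\rho_P,
\]
using that inducing the regular character of a subgroup to the ambient group yields the regular character. Consequently $[(\eta^{PN})_P, \mathbf{1}_P] = f$ is coprime to $p$, and linearity produces an irreducible constituent $\xi$ of $\eta^{PN}$ with $[\xi_P, \mathbf{1}_P]$ coprime to $p$. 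Choosing any $\chi\in\irr G$ lying over $\xi$, part (a) already yields $p\nmid[\chi_P,\mathbf{1}_P]$.

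To verify that $\chi$ vanishes on $Q\setminus\{1\}$, I apply Clifford's theorem to $N\nor PN$: $\xi_N$ is a non-negative integer combination of $PN$-conjugates of $\eta$, each of which is again a $p$-defect zero character of $N$ and hence vanishes on every non-trivial $p$-element of $N$ by Lemma~\ref{lem:defectzero}(i). Since $N\nor G$, the same vanishing is inherited by every $G$-conjugate of $\xi$, and Clifford applied to $PN\nor G$ expresses $\chi_{PN}$ as a sum of such conjugates; hence $\chi(x) = 0$ for every $x\in Q\setminus\{1\}$. Because $Q>1$, this already forces $\chi\ne \mathbf{1}_G$. The main technical step in the whole argument is the divisibility $et\mid[G:PN]$ in part (a), which is what prevents the prime $p$ from entering the ramification factors of Clifford's theorem; everything else is a direct application of Clifford's theorem, Frobenius reciprocity, and Mackey's formula.
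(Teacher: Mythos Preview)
Your proof is correct and follows essentially the same approach as the paper. The only cosmetic differences are that in part~(a) the paper invokes the Frattini argument to choose the Clifford representatives $x_i\in\norm G P$ (whereas you use Sylow conjugacy inside $PN$ to the same effect), and in part~(b) the paper applies Clifford directly to $N\nor G$ for the vanishing step rather than going through $PN$ first; both variants yield that $\chi_N$ is a sum of $p$-defect zero characters of $N$.
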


\begin{proof} (a)  By assumption $m_\tau=[\tau_P, {\bf 1}_P]$ is coprime to $p$. Write
$\chi_{PN}=e\sum_{i= 1}^t\tau^{x_i}$ by Clifford's theorem \cite[Theorem 6.2]{Isa06} with $x_i \in G$. By the Frattini argument $G=N\norm G P$ and we can choose $x_i \in \norm G P$ (with $x_1=1$). By \cite[Corollary 11.29]{Isa06}, $\chi(1)/\tau(1)=et$ divides $|G:PN|$, and hence $e$ and $t$ are coprime to $p$. In particular, $\chi_P=e\sum_{i=1}^t(\tau^{x_i})_P=e\sum_{i=1}^t(\tau_P)^{x_i}$ and so $[\chi_P, {\bf 1}_P]=etm_\tau$ is coprime to $p$.

(b) Note that $Q=P\cap N \in \syl p N$. In particular, $\eta_Q=f\cdot \rho_Q$ by Lemma \ref{lem:defectzero}(b). Notice that $(\eta^{PN})_P=(f\cdot \rho_Q)^P=f\cdot \rho_P$, so that $p$ does not divide $f=[(\eta^{PN})_P, {\bf 1}_P]$. We can choose $\tau \in \irr{PN}$ lying over $\eta$ and ${\bf 1}_P$, and such that $m_\tau=[\tau_P, {\bf 1}_P]$ is not divisible by $p$. Let $\chi \in \irr G$ lie over $\tau$. By part (a), the multiplicity $[\chi_P, {\bf 1}_P]$ is coprime to $p$.
Moreover, $\chi_N$ is a multiple of the sum of the $\norm G P$-conjugates of $\eta$, and hence a sum of $p$-defect zero characters of $N$. In particular, by Lemma \ref{lem:defectzero}(a), $\chi$ vanishes on every non-trivial element of $Q$.
\end{proof}

\begin{proof}[Proof of Theorem A]
	(i) $\Rightarrow$ (ii): If $P\nor G$ and $\chi \in \irr{({\bf 1}_P)^G}$ then $\chi$ can be seen as a character of $G/P$ and hence has degree coprime to $p$.

	(ii) $\Rightarrow$ (iii): This implication follows from Lemma \ref{lem:nonvanishing2}.

	(iii) $\Rightarrow$ (i): Suppose that $G$ is a counterexample to the statement of minimal order. 
	Of course $G>1$ and $\norm G P <G$. Let $1<M \nor G$. Given $\chi \in \irr{G/M}$ lying above ${\bf 1}_{PM/M}$ with multiplicity coprime to $p$, we can view $\chi$ as an irreducible character of $G$, and then $\chi_{PM}=m {\bf 1}_{PM}+\Delta$ where $p$ does not divide $m$ and $[\Delta , {\bf 1}_{PM}]=0$. 
	Note that every irreducible constituent of $\chi_{PM}$ contains $M$ in its kernel, and hence restricts irreducibly to $P$. In particular, $[\chi_P, {\bf 1}_P]=m$ is not divisible by $p$. By assumption, $\chi$ does not vanish in $P$ (so in $PM/M$ as a character of $G/M$). By minimality of $G$, we conclude that $PM\nor G$. Hence ${\bf O}_ p(G)=1$. 

	Let $N$ be a minimal normal subgroup of $G$ and write $Q=P\cap N\in \syl p N$.  By the paragraph above we have that $PN\nor G$. 
\smallskip

\noindent {\bf Case 1:} Suppose that $p$ divides the order of $N$. Since $N$ is not a $p$-group because $\oh p G=1$, we have that $N$ is semisimple. Let $S \nor N$ be a minimal normal subgroup of $G$. Then $N=\prod_{i=1}^r S^{g_i}$  where $\{ S^{g_i}\}_{i=1}^r=\{ S^g \ | \ g \in G\}$ and we may assume $g_1=1$. In fact, $N=\times_{i=1}^rS^{g_i}$.
	Write $R=Q\cap S\in \syl p S$, and note that $R>1$.
	Since $S$ is a non-abelian simple group, by Theorem~\ref{thm:simplegroups} either (a) $S$ has a $p$-defect zero character $\theta$ or (b) $S$ has an $\aut S$-invariant $\theta \in \irr S$ such that $p$ does not divide $[\theta_R, {\bf 1}_R]$ and $\theta(x)=0$ for some $x \in R$.
	
	In case (a), note that $\eta=\times_{i=1}^r\theta^{g_i}\in \irr N$ has $p$-defect zero. Then Lemma \ref{lem:defectzerocase}(b) yields a contradiction.
	In case (b), write $\eta=\times_{i=1}^r\theta^{g_i}\in \irr N$. Note that $\eta$ is $G$-invariant as $\theta$ is $\aut S$-invariant. Moreover,
	$[\eta_Q, {\bf 1}_Q]=[\theta_R, {\bf 1}_R]^r$ is not divisible by $p$ and $\eta(y)=0$ where $y=\prod_{i=1}^r x^{g_i} \in Q$. Since
	$(\eta^{PN})_P=(\eta_Q)^P$ contains ${\bf 1}_P$ with multiplicity $[\eta_Q, {\bf 1}_Q]$ coprime to $p$, we can choose $\tau \in \irr {PN}$ lying over $\eta$ such
	that $p$ does not divide $[\tau_P, {\bf 1}_P]$. Let $\chi \in \irr G$ lie over $\tau$. By Lemma \ref{lem:defectzerocase}(a) we have that $p$ does not divide 
	$[\chi_P, {\bf 1}_P]$. As $\eta$ is $G$-invariant, hence $\chi_N=e\eta$ and so $\chi(y)=0$ for $y \in Q\subseteq P$, yielding a contradiction also in this case.
	
\smallskip
	
\noindent {\bf Case 2:} We are left to deal with the case where $N$ is a $p'$-group. Take $K/N$ a minimal normal subgroup of $G/N$ with $K\subseteq PN$. Write $Q=P\cap K\in \syl p K$. In particular, $K=NQ$ and $Q\cong K/N$ is a $p$-elementary abelian group. By the Frattini argument $G=K\norm G Q=N \norm G Q$, and therefore it is easy to see that $\cent Q N\nor G$. The fact that ${\bf O}_p (G)=1$ forces $\cent Q N$ to be trivial, and consequently the action of $Q$ on $N$ is faithful.
 	By \cite[Lemma 2.8]{DPSS09} there is some $\theta \in \irr N$ with $K_\theta=N$. (Note that the hypotheses of \cite[Lemma 2.8]{DPSS09} are fulfilled as $Q$ acts coprimely and faithfully on $N$, $Q$ is abelian and $N$ is characteristically simple.) Let $\eta=\theta^K\in \irr K$. Then $\eta$ has $p$-defect zero as a character of $K$ and Lemma \ref{lem:defectzerocase}(b) yields the final contradiction.
\end{proof}

\section{Sylow branching coefficients of $\fS_n$ and $\fA_n$}\label{sec:coprime}

\noindent The main aim of this section is to prove Theorem B. Using Theorem B, we then complete the proof of Theorem A by showing that Theorem~\ref{thm:simplegroups} holds for alternating groups at the primes $2$ and $3$. This is done 
in Theorem~\ref{thm:An23}.

We start by recording some notation and standard facts that will be used throughout this section.

\subsection{Preliminaries}\label{sec:prelim}
For $m$ a natural number we denote by $[m]$ the set $\{1,2,\dotsc,m\}$. For $p$ a prime, $\nu_p(m)$ denotes the $p$-adic valuation of $m$, i.e.~$m=p^{\nu_p(m)}t$ where $p\nmid t$.
For a finite group $G$ and a prime number $p$, we write $\Irr_{p'}(G)=\{\chi\in\Irr(G) : p\nmid\chi(1)\}$ for the set of irreducible characters of $G$ of degree coprime to $p$. For a $p$-block $B$ of $G$, let $\Irr_0(B)$ denote the set of height zero characters in $B$. Recall that if $B$ has defect group $D$ then the height $\height(\chi)$ of an irreducible character $\chi$ is given by $\height(\chi) = \nu_p\big(\chi(1)\big) + \nu_p(|D|) - \nu_p(|G|)$. 
As is customary we denote by $g^G$ the conjugacy class of the element $g$ in $G$.

We start by recording a group-theoretical result that will be used in the proof of Theorem B.
\begin{lem}\label{lem:iv}
	Let $G$ be a finite group, let $p$ be a prime and let $P\in\syl{p}{G}$. Let $g$ be an element of $P$.  Then $\nu_p\big(|P\cap g^G|\big) = \nu_p\big(|g^G|\big)$.
\end{lem}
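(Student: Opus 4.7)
The plan is to relate $|P\cap g^G|$ to the number of Sylow $p$-subgroups of $G$ that contain $g$ via a double counting argument, and then to deduce the claim from a classical strengthening of Sylow's theorem.

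First I would count the set $S = \{x \in G : x^{-1}gx \in P\}$ in two ways. The map $x \mapsto x^{-1}gx$ is a surjection $S \to P\cap g^G$ whose fibres are cosets of $\cent{G}{g}$, yielding $|S| = |\cent{G}{g}|\cdot|P\cap g^G|$. Alternatively, the condition $x^{-1}gx \in P$ is equivalent to $g \in xPx^{-1}$, so $S$ partitions according to which Sylow $p$-subgroup $Q \ni g$ equals $xPx^{-1}$; for each such $Q$, the values of $x$ realising it form a single coset of $\norm{G}{P}$. Writing $\mathcal{P}_g = \{Q \in \syl{p}{G} : g \in Q\}$, this gives $|S| = |\mathcal{P}_g|\cdot|\norm{G}{P}|$. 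Equating the two expressions yields
$$|P\cap g^G|\cdot|\cent{G}{g}| \;=\; |\mathcal{P}_g|\cdot|\norm{G}{P}|.$$

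The remaining ingredient is that $p \nmid |\mathcal{P}_g|$. This follows from the classical extension of Sylow's theorem: for any $p$-subgroup $H \leq G$, the number of Sylow $p$-subgroups of $G$ containing $H$ is congruent to $1$ modulo $p$. The standard proof lets $H$ act by conjugation on $\syl{p}{G}$, observes that the fixed points are exactly the Sylow $p$-subgroups containing $H$, and combines this with $|\syl{p}{G}| \equiv 1 \pmod{p}$. Applied to the $p$-subgroup $H = \langle g \rangle$, this gives $|\mathcal{P}_g| \equiv 1 \pmod{p}$, and in particular $p \nmid |\mathcal{P}_g|$.

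Taking $p$-adic valuations in the displayed equation, together with $\nu_p(|\norm{G}{P}|) = \nu_p(|G|)$ (since $P$ is a Sylow $p$-subgroup) and $\nu_p(|g^G|) = \nu_p(|G|) - \nu_p(|\cent{G}{g}|)$, gives the desired equality. No real obstacle is anticipated: both ingredients are classical, and the conclusion then follows by routine bookkeeping.
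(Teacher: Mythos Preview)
Your proof is correct. Both arguments begin by counting the same set $S=\{x\in G:x^{-1}gx\in P\}$ (the paper does this implicitly via the induced-character formula $({\bf 1}_P)^G(g)=|S|/|P|$, obtaining $({\bf 1}_P)^G(g)=|G:P|\cdot|g^G\cap P|/|g^G|$). The divergence is in how one shows the relevant ratio is coprime to $p$. The paper interprets $({\bf 1}_P)^G(g)$ as the number of fixed points of $\langle g\rangle$ acting on the coset space $G/P$, which has $p'$-order, so the fixed-point count is congruent to $|G:P|\pmod p$ and hence coprime to $p$. You instead interpret $|S|/|\norm G P|=|\mathcal P_g|$ as the number of fixed points of $\langle g\rangle$ acting on $\syl p G$, and invoke the strengthened Sylow theorem to get $|\mathcal P_g|\equiv 1\pmod p$. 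These are two fixed-point counts for the same $p$-group acting on related sets (indeed $G/P\to\syl p G$, $xP\mapsto xPx^{-1}$, is an equivariant $|\norm G P:P|$-to-$1$ surjection), so the two routes are close cousins. Your version has the virtue of being entirely group-theoretic, with no appeal to characters; the paper's version is slightly shorter because the permutation-character interpretation packages the double count for free.
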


\begin{proof}
From the definition of induced character \cite[Chapter 5]{Isa06}, we have $$({\bf 1}_P)^G(g)=\frac{|\cent{G}{g}|\cdot |g^G\cap P|}{|P|}=\frac{|G:P|\cdot |g^G\cap P|}{|g^G|}.$$
We then observe that $({\bf 1}_P)^G(g)$ equals the number of fixed left cosets of $P$ in $G$ under the action of $\langle g \rangle$ by left multiplication. It follows that 
 $({\bf 1}_P)^G(g)$ is coprime to $p$. The statement then follows.
\end{proof}

\subsubsection{Characters and combinatorics of $\fS_n$}\label{sec:311}
We let $\mathcal{P}(n)$ denote the set of partitions of $n$. Given $\lambda\in\mathcal{P}(n)$ (also written $\lambda\vdash n$) we denote its conjugate by $\lambda'$. 
The set $\Irr(\fS_n)$ of ordinary irreducible characters of $\fS_n$ is naturally in bijection with $\mathcal{P}(n)$. For a partition $\lambda\in\mathcal{P}(n)$, we denote the corresponding irreducible character of $\fS_n$ by $\chi^\lambda$. 
Given $P_n$ a Sylow $p$-subgroup of $\fS_n$ and $\phi\in\Irr(P_n)$, we use the notation introduced in \cite{GL2} by letting $Z^\lambda_\phi$ denote the natural number defined by
\[ Z^\lambda_\phi:=[(\chi^\lambda)_{P_n},\phi]. \]
These multiplicities are called Sylow branching coefficients for symmetric groups. 
In this article we will be particularly interested in the case where $\phi={\bf 1}_{P_n}$ is the trivial character of $P_n$. We will sometimes use the symbol $Z^\lambda$ to denote $Z^\lambda_{{\bf 1}_{P_n}}$, to ease the notation.

To each partition $\lambda=(\lambda_1,\dotsc,\lambda_k)$ we may associate a Young diagram given by $[\lambda]=\{(i,j)\in\N\times\N : 1\le i\le k,\ 1\le j\le \lambda_i \}$. The hook of $\lambda$ corresponding to the node $(i,j)$ is denoted by $h_{i,j}(\lambda)$ and we let $|h_{i,j}(\lambda)|$ denote its size. 

For any $e\in\N$, we denote by $C_e(\lambda)$ the $e$-core of the partition $\lambda$. This is obtained from $\lambda$ by successively removing hooks of size $e$ (also called $e$-hooks) until there are no further removable $e$-hooks. We say that $\lambda$ is an $e$-core partition if $\lambda=C_e(\lambda)$. The leg length of a hook is one less than the number of rows it occupies. The $e$-weight of $\lambda$ is given by $w_e(\lambda)=(|\lambda|-|C_e(\lambda)|)/e$. We refer the reader to \cite{JK} or \cite{OlssonBook} for detailed descriptions of these combinatorial objects. 

We record here some useful facts on the degrees of irreducible characters of $\fS_n$. 
Let $p$ be a prime and let $\lambda\in\mathcal{P}(n)$. An immediate consequence of the \textit{hook length formula} \cite[20.1]{James} is that $\chi^\lambda$ has $p$-defect zero if and only if $\lambda$ is a $p$-core partition. At the other end of the spectrum, the set of irreducible characters of $\fS_n$ of degree not divisible by $p$ was completely described in \cite{Mac}. We recall this result in language convenient for our purposes.

\begin{lem}\label{lem:lemma0}
	Let $p$ be a prime and $n\in\N$. Let $n=\sum_{i=1}^ta_ip^{n_i}$ be its $p$-adic expansion, where $n_1>n_2>\cdots >n_t\ge 0$ and $a_i\in[p-1]$ for all $i\in [t]$. Let $\lambda\in\mathcal{P}(n)$ and let $\mu=C_{p^{n_1}}(\lambda)$. Then $\chi^\lambda\in\mathrm{Irr}_{p'}(\fS_n)$ if and only if
	$\mu\in\mathcal{P}(n-a_1p^{n_1})$ and 
	$\chi^{\mu}\in\mathrm{Irr}_{p'}(\fS_{n-a_1p^{n_1}})$.
\end{lem}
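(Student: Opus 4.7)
My plan is to reduce the biconditional to a termwise comparison, indexed by $k\ge 1$, of two instances of the hook-length valuation formula. Starting from $\chi^\lambda(1)=n!/\prod_{(i,j)\in[\lambda]}|h_{i,j}(\lambda)|$ and combining Legendre's formula for $\nu_p(n!)$ with the classical identity stating that the number of cells $(i,j)\in[\lambda]$ with $p^k\mid|h_{i,j}(\lambda)|$ equals $w_{p^k}(\lambda)$ (obtained because dividing such hook lengths by $p^k$ yields the multiset of hook lengths of the $p^k$-quotient of $\lambda$), I obtain
\[ \nu_p\bigl(\chi^\lambda(1)\bigr) = \sum_{k\ge 1}\bigl(\lfloor n/p^k\rfloor - w_{p^k}(\lambda)\bigr). \]
Since $w_{p^k}(\lambda)\cdot p^k\le n$ every summand is a non-negative integer, so $\chi^\lambda\in\mathrm{Irr}_{p'}(\fS_n)$ if and only if $w_{p^k}(\lambda)=\lfloor n/p^k\rfloor$ for every $k\ge 1$.

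Next, I would translate these conditions through $\mu=C_{p^{n_1}}(\lambda)$, case by case in $k$. Because $a_1\le p-1$ forces $n<p^{n_1+1}$, every summand with $k>n_1$ vanishes automatically. The $k=n_1$ summand equals $a_1-w_{p^{n_1}}(\lambda)$, so its vanishing is equivalent to $|\mu|=n-a_1p^{n_1}$, i.e.~$\mu\in\mathcal{P}(n-a_1p^{n_1})$; if this fails then $w_{p^{n_1}}(\lambda)<a_1$, the $k=n_1$ summand is strictly positive, and $p$ divides $\chi^\lambda(1)$, which handles one direction at once. Assuming now $|\mu|=n-a_1p^{n_1}$, for $1\le k<n_1$ I would invoke the standard $p^k$-abacus argument giving $C_{p^k}(\lambda)=C_{p^k}(\mu)$: the successive $p^{n_1}$-hook removals taking $\lambda$ to $\mu$ slide beads up by $p^{n_1-k}$ positions on a single runner of the $p^k$-abacus and therefore preserve the $p^k$-core. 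This yields
\[ w_{p^k}(\lambda)-w_{p^k}(\mu) = \frac{|\lambda|-|\mu|}{p^k} = a_1p^{n_1-k} = \lfloor n/p^k\rfloor - \lfloor(n-a_1p^{n_1})/p^k\rfloor, \]
so the condition $w_{p^k}(\lambda)=\lfloor n/p^k\rfloor$ is equivalent to $w_{p^k}(\mu)=\lfloor(n-a_1p^{n_1})/p^k\rfloor$. Applying the first paragraph's criterion to $\mu\vdash n-a_1p^{n_1}$ then closes the equivalence.

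The only step in this plan that is not pure bookkeeping is the equality $C_{p^k}(\lambda)=C_{p^k}(\mu)$ for $k<n_1$, so that is where I expect the main work to lie. It is classical and can be read off from the abacus descriptions in \cite{JK} or \cite{OlssonBook}, but one should justify carefully the reduction of a single $p^{n_1}$-hook removal to a bead-slide on the $p^k$-abacus; once this is in hand, all remaining steps are mechanical.
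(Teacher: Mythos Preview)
Your argument is correct. Note, however, that the paper does not actually prove this lemma: it is stated as a reformulation of Macdonald's classical description of $\Irr_{p'}(\fS_n)$ and simply attributed to \cite{Mac}. So there is no proof in the paper to compare against; you are supplying one where the authors chose to cite.

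That said, your route is essentially the one underlying Macdonald's result. The identity
\[
\nu_p\bigl(\chi^\lambda(1)\bigr)=\sum_{k\ge 1}\bigl(\lfloor n/p^k\rfloor - w_{p^k}(\lambda)\bigr),
\]
together with the nonnegativity of each summand, is precisely the content of \cite[Proposition~6.4]{OlssonBook} (itself a repackaging of Macdonald's argument), and your termwise analysis at $k=n_1$ and $k<n_1$ recovers the inductive step in Macdonald's $p$-core tower description. The two ingredients you flag --- that the number of cells of $\lambda$ with hook length divisible by $e$ equals $w_e(\lambda)$, and that $C_{p^k}(\lambda)=C_{p^k}(\mu)$ whenever $p^k\mid p^{n_1}$ --- are both standard abacus facts recorded in \cite{OlssonBook} and \cite{JK}, so your assessment that the only nontrivial step is the latter is accurate, and the abacus justification you sketch (a $p^{n_1}$-hook removal slides a bead by $p^{n_1-k}$ places on a single runner of the $p^k$-abacus, leaving bead counts per runner unchanged) is exactly the right one.
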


\noindent Repeated applications of Lemma~\ref{lem:lemma0} imply the following statement. 

\begin{lem}\label{lem:bigcore}
	Let $n$ be a natural number, let $p$ be a prime and let $\lambda\in\mathcal{P}(n)$.  If $|C_p(\lambda)|\geq p$ then $p$ divides $\chi^\lambda(1)$.
\end{lem}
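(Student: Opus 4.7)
The plan is to derive Lemma~\ref{lem:bigcore} from Lemma~\ref{lem:lemma0} by iterating along the $p$-adic expansion of $n$, using the standard fact that removing hooks of size divisible by $p$ preserves the $p$-core. Write the $p$-adic expansion of $n$ as $n=\sum_{i=1}^{t}a_i p^{n_i}$ with $n_1>n_2>\cdots>n_t\ge 0$ and $a_i\in[p-1]$, and set $m_j:=\sum_{i=j+1}^{t}a_i p^{n_i}$ for $0\le j\le t$, so that $m_0=n$ and $m_t=0$. Assuming, toward a contrapositive, that $\chi^\lambda\in\Irr_{p'}(\fS_n)$, I would apply Lemma~\ref{lem:lemma0} repeatedly to produce a sequence of partitions $\lambda=\mu_0,\mu_1,\ldots,\mu_t$ with $\mu_j\vdash m_j$ and $\mu_{j+1}=C_{p^{n_{j+1}}}(\mu_j)$, and with $\chi^{\mu_j}\in\Irr_{p'}(\fS_{m_j})$ at each step (the hypotheses needed to reapply Lemma~\ref{lem:lemma0} to $\mu_j$ are exactly the conclusion of the previous application, since the leading $p$-power in the $p$-adic expansion of $m_j$ is $p^{n_{j+1}}$).

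The core combinatorial point is then to track what happens to the $p$-core along this chain. For every $k\ge 1$, the passage $\mu\rightsquigarrow C_{p^k}(\mu)$ consists of removing hooks whose size is a positive multiple of $p$; in the $p$-runner abacus realisation this moves each bead by a multiple of $p$ positions and so does not change the runner on which it sits. Consequently $C_p(C_{p^k}(\mu))=C_p(\mu)$ whenever $k\ge 1$. Applying this to every step of the chain at which $n_{j+1}\ge 1$, I get $C_p(\mu_j)=C_p(\lambda)$ for every $j$ up to (and including) the last index $s$ with $n_s\ge 1$.

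Now I split on the final exponent. If $n_t\ge 1$, the preservation propagates all the way to $\mu_t=\emptyset$, giving $C_p(\lambda)=\emptyset$. If $n_t=0$, then $\mu_{t-1}\vdash a_t<p$, so $C_p(\mu_{t-1})=\mu_{t-1}$ (there is no room for a $p$-hook), and $C_p(\lambda)=\mu_{t-1}$ again has size $a_t<p$. Either way $|C_p(\lambda)|<p$, which is the contrapositive of the desired statement.

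The only non-routine ingredient is the preservation fact $C_p(C_{p^k}(\mu))=C_p(\mu)$ for $k\ge 1$, and this is where I expect the main (modest) obstacle: it can be cited from standard references on the abacus/core--quotient machinery (e.g.\ \cite{JK} or \cite{OlssonBook}), but if a self-contained argument is preferred, one can instead observe that each removal of a $p^k$-hook changes the $p$-weight by $p^{k-1}$ while leaving $C_p$ unchanged, which again follows from the $p$-adic abacus description. Everything else is a bookkeeping exercise on the $p$-adic expansion of $n$ combined with iterated invocation of Lemma~\ref{lem:lemma0}.
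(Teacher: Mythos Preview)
Your argument is correct and is exactly the intended one: the paper's proof consists of the single line ``Repeated applications of Lemma~\ref{lem:lemma0} imply the following statement,'' and you have supplied the details of that iteration, including the standard abacus fact that $C_p(C_{p^k}(\mu))=C_p(\mu)$ for $k\ge 1$ which makes the contrapositive go through.
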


The Murnaghan--Nakayama rule \cite[2.4.7]{JK} allows us to compute the values of the irreducible characters of $\fS_n$. 
	
\begin{thm}[Murnaghan--Nakayama rule]\label{thm:MNrule}
	Let $r,n\in\N$ with $r<n$. Suppose that $\pi\rho\in\fS_n$ where $\rho$ is an $r$-cycle and $\pi$ is a permutation of the remaining $n-r$ numbers. Then
	\[ \chi^\lambda(\pi\rho) = \sum_\mu (-1)^{h(\lambda\setminus\mu)} \chi^\mu(\pi) \]
	where the sum runs over all partitions $\mu$ obtained from $\lambda$ by removing an $r$-hook, and $h(\lambda\setminus\mu)$ denotes the leg length of the hook removed.
\end{thm}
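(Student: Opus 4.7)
The plan is to pass through the Frobenius characteristic map $\ch\colon R(\fS_n)\to\Lambda^n$ sending $\chi^\lambda\mapsto s_\lambda$ and sending the indicator of the conjugacy class of cycle type $\nu$ to $z_\nu^{-1}p_\nu$, where $z_\nu$ is the centraliser order. Orthogonality then gives the expansion
$$p_\nu \;=\; \sum_{\lambda\vdash n}\chi^\lambda(\nu)\,s_\lambda.$$
If $\pi\rho$ has cycle type $\nu=(r)\cup\mu$ with $\mu$ the cycle type of $\pi$, then $p_\nu = p_r\cdot p_\mu$, so expanding $p_\mu$ in the Schur basis yields
$$p_r\cdot \sum_{\kappa\vdash n-r}\chi^\kappa(\pi)\,s_\kappa \;=\; \sum_{\lambda\vdash n}\chi^\lambda(\pi\rho)\,s_\lambda.$$
Equating coefficients of $s_\lambda$, the theorem reduces to the purely symmetric-function identity
$$p_r\cdot s_\kappa \;=\; \sum_{\lambda}(-1)^{h(\lambda\setminus\kappa)}\,s_\lambda,$$
where the sum is over partitions $\lambda$ obtained from $\kappa$ by the addition of an $r$-hook (equivalently, $\kappa$ is obtained from $\lambda$ by removing an $r$-hook).

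To establish this identity I would combine the Jacobi--Trudi formula $s_\kappa=\det(h_{\kappa_i-i+j})_{1\le i,j\le k}$ with the Newton-type recursion
$$p_r\cdot h_m \;=\; \sum_{j=0}^{r}(-1)^j\,h_{m+r-j}\,e_j,$$
which falls out of $\log H(t)=\sum_{r\ge 1} p_r t^r/r$. Multiplying the Jacobi--Trudi determinant row by row by $p_r$ and reorganising via elementary row operations gives a signed sum of Jacobi--Trudi determinants, each indexed by a partition obtained from $\kappa$ by a single $r$-hook addition. The sign tracking becomes transparent through the $\beta$-number (abacus) description: an $r$-hook addition corresponds to sliding one bead from an occupied position $b$ to an empty position $b+r$, and the sign produced by the determinant is $(-1)^{\ell}$ where $\ell$ is the number of beads strictly between $b$ and $b+r$, which equals the leg length of the hook added.

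The main obstacle I expect is precisely this sign bookkeeping: showing that the reorganisation of the determinant produces the sign $(-1)^{h(\lambda\setminus\kappa)}$ and not some other parity, and that genuinely new terms (those not corresponding to a legal hook addition) cancel out. The abacus viewpoint is the crucial device, because a bead slide past $\ell$ intermediate beads contributes exactly one sign per transposed pair via a Vandermonde-type cancellation, and this matches the leg-length rule exactly. Once $p_r\cdot s_\kappa = \sum_\lambda (-1)^{h(\lambda\setminus\kappa)}s_\lambda$ is established, the theorem follows immediately by the Frobenius-map comparison above; an inductive application recovers the iterated version where $\pi$ is itself factored into cycles.
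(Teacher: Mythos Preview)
The paper does not give its own proof of this theorem: it is stated with a citation to \cite[2.4.7]{JK} and used as a black box. So there is no proof in the paper to compare against.

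Your strategy is the standard one and is sound. Reducing to the symmetric-function identity $p_r\, s_\kappa=\sum_\lambda(-1)^{h(\lambda\setminus\kappa)}s_\lambda$ via the Frobenius characteristic is exactly right; indeed, the paper itself invokes precisely this identity (as \cite[Theorem 7.17.1]{Stanley}) in the proof of Theorem~\ref{thm:gdc}. The abacus/$\beta$-number description of hook addition and the accompanying sign analysis is also the standard route to that identity.

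One technical slip: the ``Newton-type recursion'' you wrote,
\[
p_r\cdot h_m \;=\; \sum_{j=0}^{r}(-1)^j\,h_{m+r-j}\,e_j,
\]
is false already for $r=1$ (it would force $2e_1 h_m=h_{m+1}$). If you want to run the argument through Jacobi--Trudi, you need a correct expansion of $p_r$ in the $h$'s (or $e$'s), e.g.\ via Newton's identities, and the bookkeeping becomes awkward. The cleaner route is the bialternant formula $s_\kappa=a_{\kappa+\delta}/a_\delta$: multiplying $a_{\kappa+\delta}$ by $p_r=\sum_i x_i^r$ adds $r$ to one exponent at a time, and reordering the resulting alternants produces exactly the hook-addition terms with the leg-length sign. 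This is the argument behind \cite[Theorem 7.17.1]{Stanley}, and it avoids the recursion you wrote. With that correction, your outline goes through.
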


The cycle types of elements in $\fS_n$ are naturally parametrised by the partitions of $n$. Since the order of cycles is irrelevant, when we refer to cycle types we may sometimes use compositions rather than partitions of $n$. We further remark that if $\sigma\in\fS_n$ has cycle type given by the composition $\alpha=(1^{m_1}2^{m_2}\dotsc)$, then we say that $\sigma$ \textit{contains exactly} $m_i$ $i$-cycles, for all $i\in\mathbb{N}$.
Moreover, its centraliser has size $|\cent{\fS_n}{\sigma}|=\prod_{i\in\N} i^{m_i}\cdot m_i!$.

\subsubsection{Characters and blocks of $\fS_n$ and $\fA_n$}
Let $p$ be a prime. It is well known that the $p$-blocks of $\fS_n$ are parametrised by $p$-core partitions \cite[6.1.21]{JK}. In this article we will denote the $p$-block corresponding to the $p$-core $\gamma$ by $B(\gamma,w)$, where $w$ is the natural number such that $n=|\gamma|+pw$. As explained in \cite[6.2.39]{JK}, defect groups of $B(\gamma,w)$ are Sylow $p$-subgroups of $\fS_{pw}$. Moreover, the set $\Irr_0(B(\gamma,w))$ can be described as follows. 

\begin{lem}\label{lem:ht0}
	Let $n$ be a natural number and let $p$ be a prime. Let $\gamma$ be a $p$-core partition such that $n=|\gamma|+pw$, for some $w\in\mathbb{N}$. Let $pw=\sum_{i=1}^ta_ip^{n_i}$ be its $p$-adic expansion, where $n_1>n_2>\cdots >n_t\ge 0$ and $a_i\in[p-1]$, for all $i\in [t]$. 
	Given $\lambda\in\mathcal{P}(n)$ and $\mu=C_{p^{n_1}}(\lambda)$, we have that $$\chi^\lambda\in\Irr_0(B(\gamma,w))\ \ \text{if and only if}\ \
	\chi^{\mu}\in\Irr_0(B(\gamma,w-a_1p^{n_1-1})).$$
\end{lem}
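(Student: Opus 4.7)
The plan is to reduce Lemma~\ref{lem:ht0} to Lemma~\ref{lem:lemma0} applied component-wise on the $p$-quotient of $\lambda$. The main tool is the classical height formula in symmetric groups: if $\lambda\vdash n$ has $p$-core $\gamma$, $p$-weight $w$ and $p$-quotient $(\lambda^{(0)},\dots,\lambda^{(p-1)})$ with $w_i:=|\lambda^{(i)}|$, then the hook-length formula together with the bijection between hooks of $\lambda$ divisible by $p$ and hooks of the $p$-quotient (scaled by $p$) gives
\[ \height(\chi^\lambda) \;=\; \nu_p\!\binom{w}{w_0,\dots,w_{p-1}} \;+\; \sum_{i=0}^{p-1}\nu_p\bigl(\chi^{\lambda^{(i)}}(1)\bigr). \]
Both summands being non-negative, $\chi^\lambda\in\Irr_0(B(\gamma,w))$ is equivalent to (I) the $w_i$ summing to $w$ without $p$-carries, and (II) $\chi^{\lambda^{(i)}}\in\Irr_{p'}(\fS_{w_i})$ for every $i$.

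I would next invoke the standard abacus identification of the $p$-quotient of $\mu=C_{p^{n_1}}(\lambda)$: since a $p^{n_1}$-hook of $\lambda$ corresponds on the $p$-abacus to moving a single bead $p^{n_1-1}$ places along one runner, i.e.~to a $p^{n_1-1}$-hook in one component of the $p$-quotient of $\lambda$, the $p$-quotient of $\mu$ is precisely $\bigl(C_{p^{n_1-1}}(\lambda^{(i)})\bigr)_{i=0}^{p-1}$. Setting $b_i$ to be the $p^{n_1-1}$-weight of $\lambda^{(i)}$ and $w'_i:=w_i-b_ip^{n_1-1}=|C_{p^{n_1-1}}(\lambda^{(i)})|$, the $p^{n_1}$-weight of $\lambda$ equals $\sum_i b_i$, and $\mu$ belongs to $B(\gamma,w-a_1p^{n_1-1})$ precisely when $\sum_i b_i=a_1$. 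Since $pw=\sum_j a_jp^{n_j}$ with $n_t\ge 1$ forces $w$ to have top $p$-adic digit $a_1$ at position $n_1-1$, condition~(I) forces $w_i<p^{n_1}$ for each $i$. Writing $w_i=w'_i+b_ip^{n_1-1}$ with $w'_i<p^{n_1-1}$ and $0\le b_i\le p-1$, Lemma~\ref{lem:lemma0} applied to $\lambda^{(i)}$ converts condition~(II) into the statement that $|C_{p^{n_1-1}}(\lambda^{(i)})|=w'_i$ and $\chi^{C_{p^{n_1-1}}(\lambda^{(i)})}\in\Irr_{p'}(\fS_{w'_i})$; summation yields $\sum_i b_i=a_1$, placing $\mu$ in $B(\gamma,w-a_1p^{n_1-1})$, and condition~(I) for $\lambda$ translates cleanly into the analogous no-carry condition for $\sum_i w'_i=w-a_1p^{n_1-1}$. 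Applying the height-zero formula to $\mu$ then completes the forward implication; the converse runs symmetrically, starting from the observation that the no-carry condition for $\mu$ forces each $w'_i<p^{n_1-1}$, so adding back $b_ip^{n_1-1}$ produces the top digit of $w_i$ cleanly and Lemma~\ref{lem:lemma0} can be invoked in the opposite direction.

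The main bookkeeping obstacle is handling the boundary cases: when some $b_i=0$, one has $C_{p^{n_1-1}}(\lambda^{(i)})=\lambda^{(i)}$ and Lemma~\ref{lem:lemma0} must be applied at a strictly smaller power of $p$; and one must verify that if the $p^{n_1}$-weight $\sum_i b_i$ of $\lambda$ is strictly less than $a_1$ then both sides of the claimed equivalence are automatically false, so the statement holds vacuously. The latter follows since in that case $|\mu|>n-a_1p^{n_1}$, so $\mu\notin B(\gamma,w-a_1p^{n_1-1})$, and simultaneously the sum of the $(n_1-1)$-th $p$-adic digits of the $w_i$ is at most $\sum_i b_i<a_1$, forcing a $p$-carry in $\sum_i w_i=w$ and thus violating condition~(I).
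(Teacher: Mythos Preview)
Your overall strategy is sound and essentially reconstructs the content behind the paper's one-line citation of \cite[Lemma~3.1]{Ols76}: the paper does not give an argument at all, so your route via the height formula
\[
\height(\chi^\lambda)=\nu_p\binom{w}{w_0,\dots,w_{p-1}}+\sum_i\nu_p\bigl(\chi^{\lambda^{(i)}}(1)\bigr)
\]
and the identification of the $p$-quotient of $C_{p^{n_1}}(\lambda)$ with $\bigl(C_{p^{n_1-1}}(\lambda^{(i)})\bigr)_i$ is exactly the machinery underlying Olsson's lemma. The forward and backward implications, under the running assumption $\sum_i b_i=a_1$, are handled correctly.

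However, your treatment of the boundary case $\sum_i b_i<a_1$ contains a genuine error. You assert that the $(n_1-1)$-th $p$-adic digit of $w_i$ is at most $b_i$ (the $p^{n_1-1}$-weight of $\lambda^{(i)}$), and hence that condition~(I) must fail. This inequality is false: take $p=2$, $n_1=3$, $w=4$, $(w_0,w_1)=(4,0)$ and $\lambda^{(0)}=(2,2)$. Then $(2,2)$ is a $4$-core, so $b_0=0$, yet the digit of $w_0=4$ at position $2$ is $1$. Here condition~(I) actually \emph{holds} (there is no carry in $4+0=4$); what fails is condition~(II), since $\chi^{(2,2)}(1)=2$ is even. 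So your stated reason for the left-hand side being false is wrong, even though the conclusion is right.

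The fix is immediate from what you have already proved in the forward direction: you showed that if (I) and (II) both hold then, for each $i$, the $p^{n_1-1}$-weight $b_i$ coincides with the $(n_1-1)$-th digit of $w_i$ (using $w_i<p^{n_1}$ from (I) together with Lemma~\ref{lem:lemma0} from (II)), whence $\sum_i b_i=a_1$. The contrapositive of this disposes of the boundary case directly, without any claim about digit-versus-weight inequalities. Note also that $\sum_i b_i>a_1$ cannot occur, since $\sum_i b_i\,p^{n_1-1}\le\sum_i w_i=w<(a_1+1)p^{n_1-1}$.
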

\begin{proof}
This follows from \cite[Lemma 3.1]{Ols76}.
\end{proof}

In other words, Lemma \ref{lem:ht0} tells us that $\chi^\lambda\in\Irr_0(B(\gamma,w))$ if and only if there exists a sequence of partitions $\lambda=\lambda_0, \lambda_1,\ldots, \lambda_{a_1}=\mu$ such that $\lambda_{i+1}$ is obtained by removing a $p^{n_1}$-hook from $\lambda_i$, and such that $\lambda_{a_1}$ labels an irreducible character of height zero in $B(\gamma, w-a_1p^{n_1-1})$.
When $p=2$, we have the following.

\begin{lem}\label{lem:ht0-not-conj}
Let $n$ be a natural number and let $\lambda\in\mathcal{P}(n)$ be such that $\lambda\neq C_2(\lambda)$. If $\chi^\lambda$ is an irreducible character of height zero in its $2$-block, then $\lambda\neq \lambda'$.
\end{lem}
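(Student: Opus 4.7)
I would argue by contradiction: suppose $\lambda=\lambda'$ and that $\chi^\lambda$ has height zero in its $2$-block $B(\gamma,w)$, where $\gamma=C_2(\lambda)$ and $w=w_2(\lambda)\geq 1$ (the latter because $\lambda\neq C_2(\lambda)$). The strategy is to exploit the structure of the $2$-quotient of a self-conjugate partition together with the iterative characterisation of height-zero characters from Lemma~\ref{lem:ht0}, and then conclude via a binomial coefficient divisibility.

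A classical fact (see e.g.\ \cite[Theorem 2.5.23]{JK}) asserts that if $\lambda=\lambda'$ then the $2$-quotient $(\lambda^{(0)},\lambda^{(1)})$ of $\lambda$ satisfies $\lambda^{(1)}=(\lambda^{(0)})'$. In particular $|\lambda^{(0)}|=|\lambda^{(1)}|=w/2$, so $w$ must be even and $\chi^{\lambda^{(0)}}(1)=\chi^{\lambda^{(1)}}(1)$.

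Next, I would iterate Lemma~\ref{lem:ht0} along the binary expansion $2w=2^{n_1}+\dotsb+2^{n_t}$, at each stage peeling off a $2^{n_i}$-hook and reducing to a height-zero character in a smaller block with $2$-core $\gamma$; equivalently, one may invoke the standard $p$-quotient factorisation of the hook length formula. Either approach yields that the height-zero assumption translates into the conditions that $\chi^{\lambda^{(0)}}$ and $\chi^{\lambda^{(1)}}$ both have odd degree and that the multinomial coefficient $\binom{w}{|\lambda^{(0)}|,|\lambda^{(1)}|}=\binom{w}{w/2}$ is odd.

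To finish, I would apply Kummer's theorem to the base-$2$ sum $w/2+w/2=w$: the oddness of $\binom{w}{w/2}$ forces the addition to have no carries, hence every binary digit of $w/2$ must be $0$, which gives $w=0$, contradicting $w\geq 1$. The main obstacle will be the middle step: a clean derivation that the height-zero hypothesis implies the oddness of $\binom{w}{w/2}$. Handling this cleanly requires either a careful bookkeeping in the iterated application of Lemma~\ref{lem:ht0} (tracking how each $2^{n_i}$-hook removal interacts with the $2$-quotient components) or a direct appeal to the $p$-quotient version of the hook-length formula; the remaining combinatorial steps are routine.
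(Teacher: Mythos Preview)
Your approach is correct but takes a substantially different route from the paper. The paper's proof is a three-line argument: from Lemma~\ref{lem:ht0} it extracts only that a height-zero $\lambda$ must have a \emph{unique} $2^{n_1}$-hook (where $2^{n_1}$ is the leading term in the binary expansion of $2w$); since the diagonal hooks $h_{i,i}(\lambda)$ of a self-conjugate partition all have odd length, this unique even-length hook would have to be some $h_{i,j}(\lambda)$ with $i\neq j$, but then $h_{j,i}(\lambda)$ is a second hook of the same length, contradicting uniqueness.

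Your argument, by contrast, passes through the full $2$-quotient structure of $\lambda$ and the Olsson-type height formula
\[
\height(\chi^\lambda)=\nu_2\binom{w}{|\lambda^{(0)}|}+\nu_2\big(\chi^{\lambda^{(0)}}(1)\big)+\nu_2\big(\chi^{\lambda^{(1)}}(1)\big),
\]
together with Kummer's theorem. This is valid, and the height formula is exactly what you would recover by carefully iterating Lemma~\ref{lem:ht0} (or by the hook-length factorisation through the $p$-quotient, as you note). What your route buys is a more structural explanation---it makes transparent that the obstruction is the central binomial coefficient $\binom{w}{w/2}$---and it generalises more readily to statements about how self-conjugacy constrains heights. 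What the paper's route buys is brevity and self-containment: it uses nothing beyond Lemma~\ref{lem:ht0} and an elementary parity observation about diagonal hooks, avoiding any appeal to $p$-quotients, the height formula, or Kummer. Since the paper only needs this lemma as a quick technical input, the shorter argument is the natural choice there.
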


\begin{proof}
Let $\chi^\lambda\in\mathrm{Irr}(B(\gamma, w))$, for some $2$-core $\gamma$ and some $w\in\mathbb{N}$. 
	Let $2w=2^{n_1}+2^{n_2}+\cdots+2^{n_t}$ be the binary expansion of $2w$ where $t\in\N$ and $n_1>n_2>\cdots>n_t\ge 1$. By Lemma~\ref{lem:ht0}, $\lambda$ has a unique $2^{n_1}$-hook. Assume for a contradiction that $\lambda=\lambda'$. Then $|h_{i,i}(\lambda)|$ is odd for all $i\in\N$. Hence the unique $2^{n_1}$-hook of $\lambda$ is off the main diagonal, i.e.~it is $h_{i,j}(\lambda)$ for some $i\ne j$. This contradicts the assumption that $\lambda=\lambda'$.
\end{proof}

We now briefly recall a description of the irreducible characters and blocks of alternating groups, and refer the reader to \cite[Section 4]{OlssonBlocks} for further detail.
Let $\lambda\in\mathcal{P}(n)$. If $\lambda\ne\lambda'$ then $(\chi^\lambda)_{\fA_n}=(\chi^{\lambda'})_{\fA_n}$ is an irreducible character of $\fA_n$. On the other hand, if $\lambda=\lambda'$ then $(\chi^\lambda)_{\fA_n}=\phi^+_\lambda+\phi^-_\lambda$ with $\phi^{\pm}_\lambda\in\Irr(\fA_n)$. 
All of the irreducible characters of $\fA_n$ are of one of these two forms. 
Turning to blocks, we let $B(\gamma, w)$ be a $p$-block of $\fS_n$ and we first suppose that $p$ is odd.
If $w>0$ then $B(\gamma,w)$ covers a unique block $\hat{B}$ of $\fA_n$. Moreover, $B(\gamma,w)$ and $B(\gamma',w)$ are the only blocks covering $\hat{B}$. If $w=0$ then $B(\gamma,0)$ covers a unique block of $\fA_n$, unless $\gamma=\gamma'$. In the latter case $B(\gamma,0)$ covers two blocks of $\fA_n$ respectively containing the two irreducible constituents of $(\chi^\gamma)_{\fA_n}$.
Finally, if $B(\gamma, w)$ covers $\hat{B}$ then their defect groups are isomorphic. 
On the other hand, if $p=2$ then $\gamma=\gamma'$.
In particular we have that $B(\gamma,w)$ covers a unique block of $\fA_n$ if and only if $w>0$. Moreover, if $D$ is a defect group of $B(\gamma,w)$, then $D\cap\fA_n$ is a defect group of any $2$-block of $\fA_n$ covered by $B(\gamma, w)$.

\medskip

\subsection{Virtual characters of $\fS_n$}

As mentioned in the introduction, the following definition will play a central role in the proof of Theorem B.

\begin{defi}\label{def:C}
	Let $\lambda$ be any partition and let $e\in\N$. Set $n:=|\lambda|+e$. We let $V^\lambda[e]$ be the virtual character of $\fS_n$ defined as follows:
	\[ V^\lambda[e] := \sum_{\alpha} (-1)^{h(\alpha\setminus\lambda)} \chi^\alpha \]
	where $\alpha$ runs over all partitions of $n$ obtained from $\lambda$ by adding an $e$-hook. 
As before, $h(\alpha\setminus\lambda)$ denotes the leg length of the $e$-hook added.
\end{defi}

\begin{ex}\label{eg:C}
	Let $\lambda=(3,1)$ and $e=3$. We observe that $\lambda$ has exactly three addable 3-hooks. In particular, we have that $V^{(3,1)}[3] = \chi^{(6,1)}-\chi^{(3,2,2)}+\chi^{(3,1^4)}$.\hfill$\lozenge$
\end{ex}

We describe the values taken by the virtual characters just introduced. 

\begin{thm}\label{thm:gdc}
	Let $\lambda$ be any partition and let $e\in\N$. Let $n=|\lambda|+e$ and let $\sigma\in\fS_n$. 
	Suppose that the disjoint cycle decomposition of $\sigma$ contains exactly $k$ $e$-cycles. 
	Then
	\[ V^\lambda[e](\sigma) = \begin{cases}
	ke\cdot\chi^\lambda(\tau) & \mathrm{if}\ k>0,\\
	0 & \mathrm{if}\ k=0,
	\end{cases}\]
 	where $\tau\in\fS_{n-e}$ has cycle type equal to that of $\sigma$ except with one fewer $e$-cycle.
\end{thm}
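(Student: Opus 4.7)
The plan is to realise $V^\lambda[e]$ as an induced character via the Frobenius characteristic map $\ch$, and then to apply the induced-character formula directly. Under $\ch$, the irreducible character $\chi^\alpha$ corresponds to the Schur function $s_\alpha$, and the Murnaghan--Nakayama rule in the symmetric-function basis reads
\[ p_e \cdot s_\lambda = \sum_\alpha (-1)^{h(\alpha\setminus\lambda)} s_\alpha, \]
with $\alpha$ ranging over partitions of $n$ obtained from $\lambda$ by adding an $e$-border strip. Comparing with Definition~\ref{def:C} one reads off $\ch(V^\lambda[e]) = p_e \cdot s_\lambda$. Since $\ch$ carries induction from the Young subgroup $\fS_{n-e}\times\fS_e$ to multiplication of symmetric functions, I would deduce
\[ V^\lambda[e] = \operatorname{Ind}_{\fS_{n-e}\times\fS_e}^{\fS_n}\bigl(\chi^\lambda \boxtimes \psi_e\bigr), \]
where $\psi_e$ denotes the class function on $\fS_e$ defined by $\psi_e(\rho)=e$ if $\rho$ is an $e$-cycle and $\psi_e(\rho)=0$ otherwise, so that $\ch(\psi_e) = p_e$.

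Next, I would evaluate this induced character at $\sigma$ directly. Ordered decompositions $[n]=A\sqcup B$ with $|A|=n-e$ and $|B|=e$ parametrise the left cosets of $\fS_{n-e}\times\fS_e$ in $\fS_n$, and such a coset is fixed by conjugation by $\sigma$ precisely when every cycle of $\sigma$ lies entirely inside $A$ or entirely inside $B$. The standard induction formula therefore gives
\[ V^\lambda[e](\sigma) = \sum_{(A,B)} \chi^\lambda(\sigma|_A)\cdot \psi_e(\sigma|_B), \]
where $(A,B)$ ranges over $\sigma$-stable ordered decompositions. The crucial observation is that $\psi_e(\sigma|_B)\neq 0$ precisely when $\sigma|_B$ is a single $e$-cycle, so the surviving terms are in bijection with the $e$-cycles of $\sigma$. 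If $k=0$ the sum is empty and $V^\lambda[e](\sigma)=0$; if $k>0$, then for each of the $k$ choices of $e$-cycle $\rho$ of $\sigma$ the restriction $\sigma|_{[n]\setminus\operatorname{supp}(\rho)}$ has cycle type equal to $\tau$, contributing $e\cdot\chi^\lambda(\tau)$, and summing the $k$ equal contributions yields $V^\lambda[e](\sigma) = ke\cdot\chi^\lambda(\tau)$.

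The only substantive input is the symmetric-function Murnaghan--Nakayama identity for $p_e\cdot s_\lambda$, which is classical (it is equivalent under $\ch$ to Theorem~\ref{thm:MNrule} and appears as a Pieri-type rule in standard references on symmetric functions); everything else is bookkeeping with induced characters. A purely character-theoretic alternative, avoiding symmetric functions, would be to apply Theorem~\ref{thm:MNrule} directly inside each $\chi^\alpha(\sigma)$ in the defining sum for $V^\lambda[e](\sigma)$ and interchange the order of summation, regrouping by the smaller partitions $\mu$ that appear. Direct calculation shows, however, that contributions from $\mu\neq\lambda$ need not vanish term by term -- they must be collected together with the $\mu=\lambda$ contribution to produce $ke\cdot\chi^\lambda(\tau)$ -- so the direct MN approach is notably more intricate. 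I would therefore favour the induction route, for which the mild, main obstacle is cleanly checking the identification $\ch(V^\lambda[e]) = p_e s_\lambda$ from Definition~\ref{def:C}.
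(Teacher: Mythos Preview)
Your proposal is correct and is essentially the paper's own proof: the paper likewise identifies $\ch(V^\lambda[e])=s_\lambda\cdot p_e$ via \cite[Theorem~7.17.1]{Stanley}, recognises $p_e=\ch(f)$ for the class function $f=\psi_e$ on $\fS_e$ that is $e$ on $e$-cycles and $0$ elsewhere, deduces $V^\lambda[e]=\chi^\lambda\circ f=(\chi^\lambda\times f)^{\fS_n}$, and then evaluates at $\sigma$ using the induced-character formula \cite[(5.1)]{Isa06}, which is exactly your coset/decomposition computation written out. Your discussion of the alternative direct Murnaghan--Nakayama approach is a useful aside but not needed.
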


We observe that Theorem \ref{thm:gdc} extends \cite[Theorem 21.7]{James}. The result may be known to experts in the field, but we could not find it in the literature.
To prove Theorem~\ref{thm:gdc}, we use results from \cite{Stanley}, translating between the language of symmetric polynomials and class functions. We briefly summarise here the relevant notation.

For a partition $\mu$, we let $s_\mu$ denote the corresponding Schur function. For $e\in\mathbb{N}$, $p_e$ denotes the power sum symmetric function $\sum_i x_i^e$ in indeterminates $x_i$. If $\mu=(\mu_1,\mu_2,\dotsc,\mu_k)$, then $p_\mu$ is defined to be the product $p_{\mu_1}p_{\mu_2}\cdots p_{\mu_k}$.
The Frobenius characteristic map $\ch$ is a ring isomorphism between the algebra of class functions of finite symmetric groups and the ring of symmetric functions (for more detail, see \cite[\textsection 7.18]{Stanley}).
If $f$ is a class function of $\fS_n$, then
\begin{equation}\label{eq:ch}
\ch(f) = \sum_{\mu\vdash n}z_\mu^{-1}f(\mu)p_\mu,
\end{equation}
where $z_\mu=|\cent{\fS_n}{\omega}|$ with $\omega\in\fS_n$ an element of cycle type $\mu$. In particular, $\ch(\chi^\lambda)=s_\lambda$, for any partition $\lambda$. Moreover, for all class functions $f$ of $\fS_m$ and $g$ of $\fS_n$ we have that $\ch$ satisfies $\ch(f\circ g) = \ch(f)\cdot \ch(g)$. Here $f\circ g$ denotes the induced class function $(f\times g)^{\fS_{m+n}}_{\fS_m\times\fS_n}$. 

\begin{proof}[Proof of Theorem~\ref{thm:gdc}]
	From \cite[Theorem 7.17.1]{Stanley}, we have that
	\[ s_\lambda\cdot p_e = \sum_\alpha(-1)^{h(\alpha\setminus\lambda)}s_\alpha \]
	where the sum runs over all partitions $\alpha$ obtained from $\lambda$ by adding an $e$-hook.
	It is easy to see from \eqref{eq:ch} that if $\ch(f)=p_e$, then $f$ is the class function of $\fS_e$ given by
	\[ f(\omega) = \begin{cases}
	e & \text{if the cycle type of $\omega$ is }(e),\\
	0 & \text{otherwise}.
	\end{cases} \]
	It follows that
	\[ \ch(\chi^\lambda\circ f)=\ch(\chi^\lambda)\cdot \ch(f) = s_\lambda\cdot p_e = \sum_\alpha(-1)^{h(\alpha\setminus\lambda)}\ch(\chi^\alpha). \]
	Since $\ch$ is bijective and linear, we have that $\chi^\lambda\circ f = V^\lambda[e].$ Thus it remains to prove that
	\[  (\chi^\lambda\circ f)(\sigma)=\begin{cases}
	ke\cdot\chi^\lambda(\tau) & \text{if}\ k>0,\\
	0 & \text{if}\ k=0,\end{cases}\]
where $\tau\in\fS_{n-e}$ has cycle type equal to that of $\sigma$ except with one fewer $e$-cycle.
Since $\chi^\lambda\circ f=(\chi^\lambda\times f)^{\fS_{n+e}}$, this follows directly from \cite[(5.1)]{Isa06} and the definition of $f$ given above.
\end{proof}

We now extend Definition~\ref{def:C} by allowing the addition of multiple hooks.

\begin{defi}\label{def:iteratedC}
	Let $\lambda$ be any partition. For $e,f\in\N$, define
	\[ V^\lambda[e,f] := \sum_{\alpha}(-1)^{h(\alpha\setminus\lambda)} \sum_\beta (-1)^{h(\beta\setminus\alpha)} \chi^\beta. \]
	Here $\alpha$ runs over all partitions obtained from $\lambda$ by adding an $e$-hook. For each fixed such $\alpha$, we have $\beta$ running over partitions obtained from $\alpha$ by adding an $f$-hook.
	
	We define $V^\lambda[e_1,e_2,\dotsc,e_u]$ analogously for any $u\in\N\cup\{0\}$ and any sequence of natural numbers $e_1,\dotsc,e_u$. 
	If $u=0$, we set $V^\lambda[e_1,\dotsc,e_u]:=\chi^\lambda$.
\end{defi}

Observe that $V^\lambda[e,f]=\sum_{\alpha}(-1)^{h(\alpha\setminus\lambda)}V^{\alpha}[f]$. It is then easy to see that Theorem \ref{thm:gdc} implies $V^\lambda[e,f]=V^\lambda[f,e]$.
Similarly, we have that 
$V^\lambda[e_1,\dotsc,e_u] = V^\lambda[e_{\rho(1)},\dotsc,e_{\rho(u)}]$
for any $\rho\in\fS_u$.

\begin{ex}\label{eg:iteratedC}
	Following on from Example~\ref{eg:C}, we can compute $V^{(3,1)}[3,3]$, a virtual character of $\fS_{10}$:
	\begin{align*}
	V^{(3,1)}[3,3] &= (-1)^0 \cdot V^{(6,1)}[3] + (-1)^1\cdot V^{(3,2,2)}[3] + (-1)^2\cdot V^{(3,1^4)}[3]\\
	&= \chi^{(9,1)} + \chi^{(6,4)} -2\chi^{(6,2,2)} + 2\chi^{(6,1^4)} +\chi^{(4,4,2)} + 2\chi^{(3,2^3,1)}\\
	&\qquad\qquad  -\chi^{(3,2,2,1^3)} -\chi^{(3,3,2,1,1)} + \chi^{(3,1^7)}.\qquad\qquad\qquad\qquad\quad\lozenge
	\end{align*}
\end{ex}

We now turn to the study of the restriction to Sylow $p$-subgroups of the virtual characters introduced in Definition \ref{def:iteratedC}.
As mentioned at the beginning of Section \ref{sec:coprime}, we will use the symbol $P_n$ to denote a fixed Sylow $p$-subgroup of $\fS_n$.

\begin{thm}\label{thm:p'mult}
	Let $p$ be a prime, let $n\in\N$ and let $\gamma$ be a $p$-core partition such that $|\gamma|\le n$ and $p\mid n-|\gamma|$. Let $u\in\N\cup\{0\}$ and $t_1,\dotsc,t_u\in\mathbb{N}$ be such that 
	$n-|\gamma|=p^{t_1}+\cdots+p^{t_u}$.
	Then 
	\[ p\nmid \big[V^\gamma[p^{t_1},\dotsc,p^{t_u}]_{P_n}, {\bf 1}_{P_n}\big]. \]
\end{thm}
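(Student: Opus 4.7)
The plan is to compute $\bigl[V^\gamma[p^{t_1},\dotsc,p^{t_u}]_{P_n}, {\bf 1}_{P_n}\bigr] = \frac{1}{|P_n|}\sum_{\sigma\in P_n} V^\gamma[p^{t_1},\dotsc,p^{t_u}](\sigma)$ directly and then show its $p$-adic valuation is $0$, exploiting the fact that $\chi^\gamma$ has $p$-defect zero (since $\gamma$ is a $p$-core).

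The main technical step is to establish a closed formula for the character values. Proceeding by induction on $u$, using both the symmetry of $V^\gamma[e_1,\dotsc,e_u]$ in its arguments (pointed out immediately after Definition~\ref{def:iteratedC}) and a linear application of Theorem~\ref{thm:gdc} to virtual characters, one shows that for any $\sigma\in\fS_n$ of cycle type $\mu$ with $a_q$ parts equal to $q$ for each $q$,
\[ V^\gamma[p^{t_1},\dotsc,p^{t_u}](\sigma) \;=\; \Bigl(\prod_{i=1}^u p^{t_i}\Bigr)\cdot\prod_{q}\frac{a_q!}{(a_q-b_q)!}\cdot\chi^\gamma(\tau), \]
where $b_q$ denotes the multiplicity of $q$ in the multiset $\{p^{t_1},\dotsc,p^{t_u}\}$ and $\tau\in\fS_{|\gamma|}$ is a permutation of cycle type $\mu$ with the multiset $\{p^{t_1},\dotsc,p^{t_u}\}$ removed (with the convention that the falling factorial vanishes whenever $a_q<b_q$ for some $q$).

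Grouping the sum over $\sigma\in P_n$ by $\fS_n$-conjugacy classes restricts attention to partitions $\mu\vdash n$ all of whose parts are $p$-powers, since otherwise $\sigma_\mu^{\fS_n}\cap P_n=\emptyset$. For any such $\mu$ the associated $\tau$ is a $p$-element of $\fS_{|\gamma|}$; Lemma~\ref{lem:defectzero}(i), applied to the $p$-defect zero character $\chi^\gamma$, then forces $\chi^\gamma(\tau)=0$ unless $\tau=1$. Consequently only the cycle type $\mu^\star:=(1^{|\gamma|})\sqcup(p^{t_1},\dotsc,p^{t_u})$ contributes, yielding
\[ \bigl[V^\gamma[p^{t_1},\dotsc,p^{t_u}]_{P_n}, {\bf 1}_{P_n}\bigr] = \frac{|\sigma_{\mu^\star}^{\fS_n}\cap P_n|}{|P_n|}\cdot\prod_{i=1}^u p^{t_i}\cdot\prod_{q>1}b_q!\cdot\chi^\gamma(1), \]
where for $\mu^\star$ we have used $a_1=|\gamma|$, $b_1=0$ (since each $t_i\ge 1$) and $a_q=b_q$ for $q>1$.

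Finally, Lemma~\ref{lem:iv} gives $\nu_p(|\sigma_{\mu^\star}^{\fS_n}\cap P_n|) = \nu_p(n!) - \sum_q a_q\nu_p(q) - \sum_q\nu_p(a_q!)$. Together with the identity $\sum_q a_q\nu_p(q)=\sum_{i=1}^u t_i$, the fact that $\nu_p(\chi^\gamma(1))=\nu_p(|\gamma|!)$ (as $\chi^\gamma$ has $p$-defect zero), and $\nu_p(|P_n|)=\nu_p(n!)$, one checks that every term in the $p$-adic valuation of the right-hand side cancels, forcing the inner product to be coprime to $p$. The main obstacle is establishing the iterated character-value formula above, which requires careful bookkeeping between successive hook additions and cycle extractions while keeping track of the $p^{t_i}$-cycles that may already have been removed; once it is in hand, the rest is a routine $p$-adic valuation computation combining Lemma~\ref{lem:defectzero}(i) and Lemma~\ref{lem:iv}.
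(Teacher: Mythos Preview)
Your argument is correct and follows essentially the same route as the paper's proof: iterate Theorem~\ref{thm:gdc} to see that $V^\gamma[p^{t_1},\dotsc,p^{t_u}]$ is supported on $P_n$ only at the cycle type $\mu^\star=(1^{|\gamma|},p^{t_1},\dotsc,p^{t_u})$ (via the vanishing of the $p$-defect zero character $\chi^\gamma$ on non-trivial $p$-elements), compute the resulting constant using the centraliser formula, and finish with the $p$-adic cancellation afforded by Lemma~\ref{lem:iv} and $\nu_p(\chi^\gamma(1))=\nu_p(|\gamma|!)$. The only cosmetic difference is that you package the iterated application of Theorem~\ref{thm:gdc} as the explicit closed formula $\bigl(\prod_i p^{t_i}\bigr)\prod_q a_q!/(a_q-b_q)!\cdot\chi^\gamma(\tau)$ valid for arbitrary $\sigma$, whereas the paper writes $V^\gamma[e_1,\dotsc,e_u]_{P_n}=zT_{\sigma}$ and evaluates $z=\chi^\gamma(1)\cdot|\cent{\fS_n}{\sigma}|/|\gamma|!$ directly at $\sigma=\sigma_{\mu^\star}$; the two expressions agree.
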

\begin{proof}
	If $|\gamma|=n$ then $u=0$ and $V^\gamma[p^{t_1},\dotsc,p^{t_u}]=\chi^\gamma$ is a $p$-defect zero character. Then $p$ does not divide  $[(\chi^\gamma)_{P_n},{\bf 1}_{P_n}]$ by Lemma~\ref{lem:defectzero}. 
	
	We now assume that $|\gamma|<n$ and $u>0$. 
	To ease the notation we let $e_i=p^{t_i}$, for all $i\in [u]$.
	We recall that $\chi^\gamma$ is zero on every non-trivial element of $P_{|\gamma|}$ since $\gamma$ is a $p$-core partition. By repeated applications of Theorem~\ref{thm:gdc}, we find that $V^\gamma[e_1,\dotsc,e_u]$ is zero on every element of $P_n$ except those with cycle type $(e_1,e_2,\dotsc,e_u,1^{|\gamma|})$. Let $\sigma\in P_n$ have cycle type $(e_1,e_2,\dotsc,e_u,1^{|\gamma|})$. Let $T_\sigma$ be the class function of $P_n$ taking value 1 on those elements with the same cycle type as $\sigma$ and 0 otherwise. Then $V^\gamma[e_1,\dotsc,e_u]_{P_n}=zT_\sigma$ for some $z\in\N$. Thus
	\[ \big[ V^\gamma[e_1,\dotsc,e_u]_{P_n},{\bf 1}_{P_n}\big] = [zT_\sigma, {\bf 1}_{P_n}] = \frac{1}{|P_{n}|} \sum_{\tau\in P_{n}} zT_\sigma(\tau) = \frac{z}{|P_n|}\cdot|P_n\cap\sigma^{\fS_n}|. \]
	On the other hand, from Theorem~\ref{thm:gdc} we also have that
	\[ V^\gamma[e_1,\dotsc,e_u](\sigma) = \chi^\gamma(1)\cdot\prod_{i\ge 2} (i^{a_i}\cdot a_i!) \]
	where $a_i=|\{j\in[u] : e_j=i\}|$. Hence
	\[ z =zT_\sigma(\sigma)= V^\gamma[e_1,\dotsc,e_u](\sigma) = \chi^\gamma(1)\cdot\frac{|\cent{\fS_n}{\sigma}|}{|\gamma|!}. \]
	Therefore, to conclude the proof it suffices to show that
	\begin{equation}\label{eq:stp}
	\nu_p\big(\chi^\gamma(1)\big) - \nu_p(|\gamma|!)+\nu_p\big(|\cent{\fS_n}{\sigma}|\big) - \nu_p(|\fS_n|)+\nu_p\big(|P_n\cap\sigma^{\fS_n}|\big)=0,
	\end{equation}
	where we have used that $\nu_p(|P_n|)=\nu_p(|\fS_n|)$. Since $\gamma$ is a $p$-core, we have $\nu_p\big(\chi^\gamma(1)\big) = \nu_p(|\gamma|!)$. Thus \eqref{eq:stp} follows from the Orbit--Stabiliser theorem (giving $|\fS_n|=|\sigma^{\fS_n}|\cdot|\cent{\fS_n}{\sigma}|$) and Lemma~\ref{lem:iv}.
\end{proof}

Starting with Definition \ref{def:C}, in this section we introduced a family of virtual characters of $\fS_n$ and we have studied their properties. 
The main ingredient of our proof of Theorem B for symmetric groups (Corollary \ref{cor:B-Sn} below) is a specific member of this family. 
We highlight this specific virtual character in the following definition. 

\begin{defi}\label{def: VirtualCharBlock}
Let $\gamma$ be a $p$-core partition and let $n=|\gamma|+wp$ for some integer $w>0$. Let $B=B(\gamma, w)$ be the $p$-block of $\fS_n$ labelled by $\gamma$. 
Let $wp=\sum_{i\ge 1}a_ip^i$ with $a_i\in\{0,1,\dotsc,p-1\}$ for each $i$.
We denote by $V^B$ the virtual character of $\fS_n$ defined as 
\[ V^B=V^\gamma[e_1,\dotsc,e_u], \]
where $e_1,\dotsc,e_u$ are natural numbers such that
$|\{j\in[u] : e_j=p^i \}|=a_i$ for all $i\ge 1$, and $u=\sum_i a_i$.
\end{defi}

In other words, the numbers $e_1,\dotsc,e_u$ in Definition~\ref{def: VirtualCharBlock} are the various $p^i$ appearing in the $p$-adic expansion $wp=\sum_{i\ge 1} a_ip^i$, counted with multiplicity.

\smallskip

We remark that every irreducible character $\chi\in\Irr(\fS_n)$ appearing with non-zero coefficient in $V^B$ belongs to $\Irr_0(B)$. This follows directly from Lemma~\ref{lem:ht0}. 

\begin{cor}\label{cor:B-Sn}
	Theorem B holds when $G$ is a finite symmetric group.
\end{cor}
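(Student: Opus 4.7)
The plan is to apply the machinery already developed in this section---specifically the virtual character $V^B$ from Definition~\ref{def: VirtualCharBlock} together with Theorem~\ref{thm:p'mult}---to extract the desired height zero character by linearity.

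First I would parameterise an arbitrary $p$-block of $\fS_n$ as $B=B(\gamma,w)$ for a $p$-core partition $\gamma$ with $n=|\gamma|+pw$ and $w\ge 0$. The case $w=0$ is immediate: then $\Irr(B)=\{\chi^\gamma\}$ and $\chi^\gamma$ has $p$-defect zero, so Lemma~\ref{lem:defectzero}(ii) gives $(\chi^\gamma)_{P_n}=f\cdot\rho_{P_n}$ with $p\nmid f$, whence $[(\chi^\gamma)_{P_n},{\bf 1}_{P_n}]=f$ is coprime to $p$ and $\chi^\gamma$ itself witnesses the claim.

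Assume now $w>0$ and consider $V^B=V^\gamma[e_1,\dots,e_u]$ as in Definition~\ref{def: VirtualCharBlock}. By construction, $e_1,\dots,e_u$ are powers of $p$ summing to $pw=n-|\gamma|$, so Theorem~\ref{thm:p'mult} applies and yields $p\nmid [V^B_{P_n},{\bf 1}_{P_n}]$. On the other hand, Lemma~\ref{lem:ht0}---as noted in the remark preceding the statement of the corollary---ensures that every irreducible constituent of $V^B$ with nonzero coefficient lies in $\Irr_0(B)$. Writing $V^B=\sum_\lambda c_\lambda \chi^\lambda$ with $c_\lambda\in\Z$ and each $\chi^\lambda\in\Irr_0(B)$, linearity of the inner product gives
\[ [V^B_{P_n},{\bf 1}_{P_n}] = \sum_\lambda c_\lambda \cdot [(\chi^\lambda)_{P_n},{\bf 1}_{P_n}]. \]
Since the left-hand side is coprime to $p$, at least one summand on the right-hand side must be, producing an irreducible $\chi^\lambda\in\Irr_0(B)$ with $[(\chi^\lambda)_{P_n},{\bf 1}_{P_n}]$ not divisible by $p$, as required.

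There is no real obstacle once Theorem~\ref{thm:p'mult} is in hand: the corollary follows purely by linearity together with a pigeonhole argument on the coefficients. All of the substantive combinatorics---choosing the correct virtual character $V^B$, identifying its support via the Murnaghan--Nakayama rule and the Frobenius characteristic map, and extracting the $p$-adic valuation of its multiplicity with ${\bf 1}_{P_n}$ using the hook length formula, the orbit--stabiliser theorem and Lemma~\ref{lem:iv}---has already been isolated in the proof of Theorem~\ref{thm:p'mult}.
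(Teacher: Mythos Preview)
Your proof is correct and follows essentially the same approach as the paper: the same case split on $w=0$ versus $w>0$, the same appeal to Lemma~\ref{lem:defectzero} for the defect-zero case, the same application of Theorem~\ref{thm:p'mult} to $V^B$, and the same linearity/pigeonhole extraction of a height-zero constituent via the remark after Definition~\ref{def: VirtualCharBlock}.
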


\begin{proof}
	Let $G=\fS_n$ and suppose $B=B(\gamma,w)$ for some $p$-core $\gamma$ and $w\ge 0$. If $w=0$ then $\chi^\gamma\in\Irr_0(B)$ since $\gamma$ is a $p$-core partition. By Lemma~\ref{lem:defectzero}, $p\nmid [(\chi^\gamma)_{P_n},{\bf 1}_{P_n}]$.
	
	Now suppose $w>0$. 
	Consider the virtual character $V^B=V^\gamma[e_1,\dotsc,e_u]$, introduced in Definition \ref{def: VirtualCharBlock}, and note that $e_i>1$ and $e_i$ is a power of $p$ for every $i\in[u]$.
	It follows from Theorem~\ref{thm:p'mult} that $p\nmid[(V^B)_{P_n},{\bf 1}_{P_n}]$. 
	Hence, there exists an irreducible character $\chi$ occurring in $V^B$ that satisfies $p\nmid[\chi_{P_n},{\bf 1}_{P_n}]$.
	As remarked after Definition \ref{def: VirtualCharBlock} we know that $\chi\in\Irr_0(B)$, as desired. 
\end{proof}

We are now ready to treat the case of alternating groups, which will conclude the proof of Theorem B. 

\begin{cor}\label{cor:B-An}
	Theorem B holds when $G$ is a finite alternating group.
\end{cor}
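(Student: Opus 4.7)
The plan is to deduce Theorem B for $\fA_n$ from the symmetric-group statement already proved in Corollary~\ref{cor:B-Sn}, by tracking how irreducible characters and trivial Sylow branching coefficients behave under restriction from $\fS_n$ to $\fA_n$. Fix a $p$-block $B$ of $\fA_n$, a Sylow $p$-subgroup $P\in\syl{p}{\fA_n}$ contained in some $P_n\in\syl{p}{\fS_n}$, and a $p$-block $\tilde{B}=B(\gamma,w)$ of $\fS_n$ covering $B$; here $P=P_n$ when $p$ is odd and $[P_n:P]=2$ when $p=2$. If $w=0$ then $\tilde{B}$ has trivial defect group, and the description of alternating-group blocks recalled in Section~\ref{sec:prelim} makes every $\psi\in\Irr(B)$ a $p$-defect zero character of $\fA_n$, so Lemma~\ref{lem:defectzero}(ii) already provides the required $\chi$.

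Suppose now that $w>0$ and $p$ is odd, so $P=P_n$. I would apply Corollary~\ref{cor:B-Sn} to obtain $\chi^\lambda\in\Irr_0(\tilde{B})$ with $[\chi^\lambda_{P_n},{\bf 1}_{P_n}]$ coprime to $p$ and then examine $(\chi^\lambda)_{\fA_n}$. When $\lambda\neq\lambda'$ this restriction is an irreducible $\psi\in\Irr_0(B)$ satisfying $[\psi_P,{\bf 1}_P]=[\chi^\lambda_{P_n},{\bf 1}_{P_n}]$, as desired. When $\lambda=\lambda'$, one has $(\chi^\lambda)_{\fA_n}=\psi^++\psi^-$ with both constituents in $\Irr_0(B)$ (using that $w>0$ forces $\tilde{B}$ to cover a unique block of $\fA_n$) and $[\psi^+_P,{\bf 1}_P]+[\psi^-_P,{\bf 1}_P]=[\chi^\lambda_{P_n},{\bf 1}_{P_n}]$; since $p$ is odd and this sum is coprime to $p$, at least one summand supplies the required character.

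The main obstacle is the case $p=2$ and $w>0$, where the index-$2$ extension $P<P_n$ must be controlled. Lemma~\ref{lem:ht0-not-conj} guarantees that every $\chi^\lambda\in\Irr_0(\tilde{B})$ satisfies $\lambda\neq\lambda'$, so each $\psi_\lambda:=(\chi^\lambda)_{\fA_n}$ is irreducible and lies in $\Irr_0(B)$. Frobenius reciprocity, together with the fact that the sign character of $P_n$ has kernel $P$, yields
\[ [(\psi_\lambda)_P,{\bf 1}_P]\;=\;[\chi^\lambda_{P_n},{\bf 1}_{P_n}]+[\chi^{\lambda'}_{P_n},{\bf 1}_{P_n}], \]
so the task reduces to making this sum odd for some $\lambda$ with non-zero coefficient $c_\lambda$ in $V^B=\sum_\lambda c_\lambda\chi^\lambda$. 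The crux is that the involution $\omega$ on symmetric functions, which intertwines $\chi\mapsto\chi\otimes\mathrm{sgn}$ with $\lambda\mapsto\lambda'$, acts on $\ch(V^B)=s_\gamma\prod_i p_{e_i}$ as multiplication by $(-1)^u$---since $\gamma=\gamma'$ for $p=2$ and every $e_i$ is even---so $c_{\lambda'}=(-1)^u c_\lambda$. Grouping the sum $[V^B_{P_n},{\bf 1}_{P_n}]=\sum_\lambda c_\lambda[\chi^\lambda_{P_n},{\bf 1}_{P_n}]$, which is odd by Theorem~\ref{thm:p'mult}, into conjugate pairs $\{\lambda,\lambda'\}$ produces pair-contributions $c_\lambda\bigl([\chi^\lambda_{P_n},{\bf 1}_{P_n}]+(-1)^u[\chi^{\lambda'}_{P_n},{\bf 1}_{P_n}]\bigr)$; each is congruent modulo $2$ to $c_\lambda[(\psi_\lambda)_P,{\bf 1}_P]$ because $(-1)^u\equiv 1\pmod{2}$, so at least one pair must contribute oddly, providing the required $\chi=\psi_\lambda\in\Irr_0(B)$.
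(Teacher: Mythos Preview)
Your proof is correct and follows essentially the same route as the paper's: reduce to the symmetric-group case via Corollary~\ref{cor:B-Sn}, handle $w=0$ by defect-zero considerations, treat odd $p$ by restricting the height-zero character $\chi^\lambda$ directly, and for $p=2$ exploit Lemma~\ref{lem:ht0-not-conj} together with the conjugate-pair symmetry of the coefficients in $V^{\bar B}$ to force some pair $\{\lambda,\lambda'\}$ to have odd combined Sylow branching coefficient. The only cosmetic differences are that the paper asserts $[(\phi^+_\lambda)_P,{\bf 1}_P]=[(\phi^-_\lambda)_P,{\bf 1}_P]$ in the odd-$p$ self-conjugate case (where you instead argue that their sum being coprime to $p$ suffices), and that you justify the pairing $c_{\lambda'}=(-1)^u c_\lambda$ via the involution $\omega$ on symmetric functions, whereas the paper simply notes that $V^{\bar B}$ decomposes into terms of the form $\pm(\chi^\lambda+\chi^{\lambda'})$ or $(\chi^\lambda-\chi^{\lambda'})$.
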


\begin{proof}
	Let $G=\fA_n$ and suppose $B$ is a $p$-block of $\fA_n$. Let $\bar{B}=B(\gamma,w)$ be a block of $\fS_n$ covering $B$, and let $\bar{D}$ and $D=\bar{D}\cap\fA_n$ be defect groups of $\bar{B}$ and $B$ respectively. Let $\bar{P}\in\syl{p}{\fS_n}$ and $P:=\bar{P}\cap\fA_n\in\syl{p}{\fA_n}$. 
	
	First assume $p$ is odd. Then $P=\bar{P}$ and $D=\bar{D}$. 
	By Corollary~\ref{cor:B-Sn}, there exists $\chi^\lambda\in\Irr_0(\bar{B})$ such that $p\nmid[(\chi^\lambda)_{\bar{P}},{\bf 1}_{\bar{P}}]$. 
	If $\lambda\ne\lambda'$ then $\phi:=(\chi^\lambda)_{\fA_n}\in B$. Since $\chi^\lambda\in\Irr_0(\bar{B})$ we have that $\phi\in\Irr_0(B)$, as $|\bar{D}|=|D|$ and $p$ is odd. Moreover, $p\nmid[\phi_P,{\bf 1}_P]=[(\chi^\lambda)_{\bar{P}},{\bf 1}_{\bar{P}}]$. 
	On the other hand, if $\lambda=\lambda'$ then $(\chi^\lambda)_{\fA_n}=\phi^+_\lambda + \phi^-_\lambda$ and at least one of $\phi^{+}_\lambda$ and $\phi^{-}_\lambda$ belongs to $B$. Note $\phi^{+}_\lambda(1)=\phi^{-}_\lambda(1)=\tfrac{1}{2}\chi^\lambda(1)$. Since $p$ is odd we have that both $\phi^{+}_\lambda$ and $\phi^{-}_\lambda$ are height zero characters in their block. Moreover, $p\nmid[(\phi^{+}_\lambda)_P,{\bf 1}_P]=[(\phi^{-}_\lambda)_P,{\bf 1}_P]=\tfrac{1}{2}[(\chi^\lambda)_{\bar{P}},{\bf 1}_{\bar{P}}]$.
	
	Now assume $p=2$. First suppose $w>0$. Let $2w=2^{n_1}+2^{n_2}+\cdots+2^{n_t}$ be the binary expansion of $2w$ where $t\in\N$ and $n_1>\cdots>n_t\ge 1$. Let $V^{\bar{B}}=V^\gamma[2^{n_1},\ldots,2^{n_t}]$. By Lemma~\ref{lem:ht0}, every irreducible character $\chi^\lambda\in\Irr(\fS_n)$ occurring in the linear combination $V^{\bar{B}}$ belongs to $\Irr_0(\bar{B})$. Moreover, $\lambda\ne\lambda'$ for all such characters $\chi^\lambda$ by Lemma~\ref{lem:ht0-not-conj}. Since $\gamma$ is self-conjugate, if $\chi^\lambda$ occurs in $V^{\bar{B}}$ then so does $\chi^{\lambda'}$. In particular, we can write $V^{\bar{B}}$ as a sum of terms of the form $\pm(\chi^\lambda+\chi^{\lambda'})$ or $(\chi^\lambda-\chi^{\lambda'})$ for various $\lambda$. Since $2\nmid[(V^{\bar{B}})_{\bar{P}},{\bf 1}_{\bar{P}}]$ by Theorem~\ref{thm:gdc}, we deduce that $2\nmid [(\chi^\lambda)_{\bar{P}},{\bf 1}_{\bar{P}}]+[(\chi^{\lambda'})_{\bar{P}},{\bf 1}_{\bar{P}}]$ for some $\chi^\lambda\in\Irr_0(\bar{B})$. Let $\phi:=(\chi^\lambda)_{\fA_n}$. Since $|D|=\tfrac{1}{2}|\bar{D}|$, we deduce that $\phi\in\Irr_0(B)$. Moreover, $$[\phi_P,{\bf 1}_P]=[(\chi^\lambda)_{\bar{P}},{\bf 1}_{\bar{P}}]+[(\chi^{\lambda'})_{\bar{P}},{\bf 1}_{\bar{P}}].$$
We conclude that $2\nmid [\phi_P,{\bf 1}_P]$, as desired. 
	
	Finally, if $p=2$ and $w=0$, then $D=1$ and $B$ contains a unique irreducible character $\phi$ that is at the same time of $p$-defect zero and of height zero in $B$. By Lemma~\ref{lem:defectzero} we have that 
$2\nmid [\phi_P,{\bf 1}_P]$. The proof is concluded.
\end{proof}

As promised at the start of Section \ref{sec:coprime}, we use Theorem B to prove Theorem~\ref{thm:An23} and thereby complete the proof of Theorem A. In order to do this, we first show that irreducible characters in symmetric and alternating groups of degree coprime to $p$ are characterised by the non-vanishing property on Sylow $p$-subgroups. 

\begin{thm}\label{thm:nonvanishing} 
	Let $G$ be a finite symmetric or alternating group, $p$ be a prime and $P\in \syl p G$. Then $\chi \in \irr G$ has degree coprime to $p$ if and only if $\chi(x)\neq 0$ for every $x \in P$.
\end{thm}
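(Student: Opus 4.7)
The forward implication is Lemma~\ref{lem:nonvanishing2}. For the converse, assume $\chi\in\Irr(G)$ satisfies $p\mid\chi(1)$; I must produce $x\in P$ with $\chi(x)=0$. Since $\chi$ is a class function and all Sylow $p$-subgroups of $G$ are conjugate, it suffices to exhibit \emph{any} $p$-element $x$ of $G$ with $\chi(x)=0$ and then conjugate into the prescribed $P$.

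The backbone is a $p$-adic Murnaghan--Nakayama argument in $\fS_n$. Take $G=\fS_n$ and $\chi=\chi^\lambda$. If $\lambda$ is a $p$-core, then $\chi^\lambda$ has $p$-defect zero, and since $p\mid\chi^\lambda(1)$ forces $p\mid n!$ (and so $P_n\neq 1$), Lemma~\ref{lem:defectzero}(i) gives vanishing on every non-trivial element of $P_n$. Otherwise, let $n=\sum_{i=1}^t a_ip^{n_i}$ be the $p$-adic expansion as in Lemma~\ref{lem:lemma0}, and set $\lambda^{(0)}=\lambda$ and $\lambda^{(i)}=C_{p^{n_i}}(\lambda^{(i-1)})$. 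Iterating Lemma~\ref{lem:lemma0} shows that $\chi^\lambda\in\Irr_{p'}(\fS_n)$ iff $w_{p^{n_i}}(\lambda^{(i-1)})=a_i$ for every $i\in[t]$; hence the hypothesis $p\mid\chi^\lambda(1)$ yields a minimal $k\in[t]$ with $w_{p^{n_k}}(\lambda^{(k-1)})<a_k$. Choose any $p$-element $x\in\fS_n$ whose disjoint cycle decomposition contains exactly $a_i$ cycles of length $p^{n_i}$ for $i\in[k]$ (the remaining $m_k:=n-\sum_{i\le k}a_ip^{n_i}$ points being fixed, or grouped into further $p$-power cycles). Apply the Murnaghan--Nakayama rule (Theorem~\ref{thm:MNrule}) iteratively, stripping cycles in increasing order of $i$. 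At each earlier stage $i<k$, the condition $w_{p^{n_i}}(\lambda^{(i-1)})=a_i$ forces every length-$a_i$ chain of $p^{n_i}$-hook removals from $\lambda^{(i-1)}$ to terminate at the unique $p^{n_i}$-core $\lambda^{(i)}$, so the value collapses to an integer multiple of $\chi^{\lambda^{(i)}}$ on the remaining cycle factor. At stage $k$, no length-$a_k$ chain of $p^{n_k}$-hook removals from $\lambda^{(k-1)}$ exists, so the MNR sum is empty and $\chi^\lambda(x)=0$.

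For $G=\fA_n$ and $\chi\in\Irr(\fA_n)$ with $p\mid\chi(1)$, pick $\chi^\lambda\in\Irr(\fS_n)$ lying above $\chi$. When $p$ is odd, a cycle of length $p^{n_i}$ contributes sign $(-1)^{p^{n_i}-1}=+1$, so the element $x$ constructed above already lies in $\fA_n$; and when $\lambda=\lambda'$ the cycle type of $x$ is not of the special principal-hook form that distinguishes $\phi_\lambda^+$ from $\phi_\lambda^-$, so $\phi_\lambda^\pm(x)=\tfrac12\chi^\lambda(x)=0$. When $p=2$, $x$ may have sign $-1$; however the cycle type of $x$ is highly non-unique (any $p$-element with the required $a_i$ cycles of length $2^{n_i}$ for $i\le k$ works), and we may append an additional transposition on two of the $m_k$ free points (available whenever $m_k\ge 2$) to flip the parity without disturbing the stage-$k$ obstruction that forces the MNR sum to be empty. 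The tight arithmetic cases where $m_k\in\{0,1\}$ severely restrict the shape of $n$ and $\lambda$ and will be handled directly, either by varying $k$ or by subdividing one of the existing $2^{n_i}$-cycles.

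I expect the main obstacle to be this $p=2$ alternating case, where one must simultaneously secure the MNR-based vanishing on $\chi^\lambda$, the parity condition $x\in\fA_n$, and (in the self-conjugate split case) avoid the diagonal cycle type that separates $\phi_\lambda^+$ from $\phi_\lambda^-$. Once the symmetric group case is settled via the $p$-adic Murnaghan--Nakayama argument above, what remains is careful bookkeeping on cycle lengths and signs.
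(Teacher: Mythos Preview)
Your plan is correct and takes essentially the same route as the paper. Both locate the first stage $k$ (the paper writes $r$) where the iterated $p^{n_i}$-core condition of Lemma~\ref{lem:lemma0} fails and exhibit a $p$-element forcing an empty Murnaghan--Nakayama sum at that stage; for $\fA_n$ both then handle parity and the self-conjugate diagonal-hook obstruction, the only difference being that for $p=2$ the paper works with the \emph{full} $p$-adic cycle type $(2^{n_1},\dots,2^{n_t})$ throughout and fixes parity by halving a single $2^{n_i}$-cycle (taking $i=1$ if $r>1$ and $i=2$ if $r=1$)---exactly the cycle-subdivision you already anticipate for your tight cases $m_k\le 1$.
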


\begin{proof}
	Let $\chi\in\Irr(G)$. By Lemma~\ref{lem:nonvanishing2}, we need to prove that if $\chi(x)\ne 0$ for every $x\in P$ then $p\nmid\chi(1)$. 
	
	Let $n\in\N$ with $p$-adic expansion $n=\sum_{i=1}^ta_ip^{n_i}$, where $n_1>n_2>\cdots>n_t\ge 0$ and $a_i\in[p-1]$ for all $i\in [t]$. 
	Since the theorem holds trivially for $n<p$, we assume from now on that $n\geq p$.
	We call an element $g\in\fS_n$ a \emph{$p$-adic element} if in the disjoint cycle decomposition of $g$ there are $a_i$ cycles of length $p^{n_i}$ for each $i\in[t]$. 
	We proceed by splitting the proof into two cases according to $G=\fS_n$ or $G=\fA_n$.
	
	\smallskip

	\noindent\textbf{(i) $G=\fS_n$:}  Given $\chi\in\mathrm{Irr}(\fS_n)$ with $p\mid \chi(1)$, we claim that $\chi(g)=0$ for any $p$-adic element $g\in P$.
We show that the above claim holds by induction on $t$, the $p$-adic length of $n$. 
Let $\chi=\chi^\lambda$ for some $\lambda\vdash n$. 
	
	If $t=1$, then $n=ap^k$ for some $a\in[p-1]$ and $k\in\N$. In this setting $g$ has cycle type $(p^k,p^k,\ldots, p^k)$ (i.e.~$g$ is the product of $a$ cycles of length $p^k$).
	Since $p$ divides $\chi^\lambda(1)$, by Lemma \ref{lem:lemma0} we have that $|C_{p^k}(\lambda)|>0$. Equivalently, the $p^k$-weight of $\lambda$ is strictly smaller than $a$. Hence it is not possible to successively remove $a$ $p^k$-hooks from $\lambda$. Using the Murnaghan--Nakayama rule, we conclude that $\chi^\lambda(g)=0$. 
	
	Let us now assume that $t\geq 2$ and that the claim holds for $t-1$. In this setting we have that $g=\rho\pi$, where $\rho$ is the product of $a_1$ cycles of length $p^{n_1}$ and $\pi$ is a $p$-adic element of $\fS_{n-a_1p^{n_1}}$. Clearly, the $p^{n_1}$-weight $w_{p^{n_1}}(\lambda)$ of $\lambda$ is smaller than or equal to $a_1$. If $w_{p^{n_1}}(\lambda)<a_1$ then $\chi^\lambda(g)=0$ by the Murnaghan--Nakayama rule. Otherwise, $w_{p^{n_1}}(\lambda)=a_1$ and $\nu:=C_{p^{n_1}}(\lambda)$ is a partition of $n-a_1p^{n_1}$. Since $p$ divides $\chi^\lambda(1)$, Lemma \ref{lem:lemma0} implies that $p\mid\chi^\nu(1)$. The inductive hypothesis now guarantees that $\chi^\nu(\pi)=0$. This concludes the proof of our claim, as another application of the Murnaghan--Nakayama rule shows that there exists $k\in\Z$ such that $\chi^\lambda(g)=k\cdot\chi^\nu(\pi)=0$. 
	
	We can now conclude that if $\chi^\lambda(x)\neq 0$ for all $x\in P$ then $\chi^\lambda\in\mathrm{Irr}_{p'}(\fS_n)$, because we can always find a $p$-adic element of $\fS_n$ lying in $P$.

	\smallskip
	
	\noindent\textbf{(ii) $G=\fA_n$:} 
	If $p$ is odd then $P\in\syl p{\fA_n}$ is also a Sylow $p$-subgroup of $\fS_n$, and hence $P$ contains a $p$-adic element $g$. 
	Consider $\phi\in\Irr(\fA_n)$ with $p\mid\phi(1)$, and let $\lambda\vdash n$ be such that $\phi$ is an irreducible constituent of $(\chi^\lambda)_{\fA_n}$. If $\lambda\neq \lambda'$, then $\phi=(\chi^\lambda)_{\fA_n}$ and hence $\phi(g)=\chi^\lambda(g)=0$, as $p$ divides $\chi^\lambda(1)$. 
	If $\lambda=\lambda'$ then $p$ divides $\chi^\lambda(1)=2\phi(1)$. Let $\delta:=(|h_{1,1}(\lambda)|, |h_{2,2}(\lambda)|,\ldots, |h_{\ell,\ell}(\lambda)|)$ be the partition of $n$ given by the diagonal hook lengths of $\lambda$. We observe that $\delta$ cannot be equal to the cycle type of $g$. This is clear whenever $a_i\geq 2$ for some $i\in[t]$, because $|h_{j,j}(\lambda)|>|h_{j+1,j+1}(\lambda)|$ for all $j\in [\ell-1]$. On the other hand, if $a_i=1$ for all $i\in[t]$, then Lemma~\ref{lem:lemma0} shows that we cannot have $|h_{i,i}(\lambda)|=p^{n_i}$ for all $i\in[t]$, because $p$ divides $\chi^\lambda(1)$.
	It follows that $\phi(g)=\frac{1}{2}\chi^\lambda(g)=0$, by \cite[2.5.13]{JK}. 

	If $p=2$, then $P$ is a subgroup of index $2$ of a Sylow $2$-subgroup of $\fS_n$. 
	Let $\phi\in\Irr(\fA_n)$ with $2\mid\phi(1)$ and let $\lambda\vdash n$ be such that $\phi$ is an irreducible constituent of $(\chi^\lambda)_{\fA_n}$.
	We observe that $\chi^\lambda(1)$ is even. From Lemma \ref{lem:lemma0} we deduce that there exists $s\in[t]$ such that $|C_{2^s}(\lambda)|>n-(2^{n_1}+\cdots+2^{n_s})$. 
	Let $$r=\mathrm{min}\{s\in[t]\ :\ |C_{2^s}(\lambda)|>n-(2^{n_1}+\cdots+2^{n_s})\}.$$
	We now pick $g\in \fS_n$ of cycle type 
	\[ \begin{cases}
	(2^{n_1}, 2^{n_2}, \ldots, 2^{n_t}) & \text{if } t \text{ is even},\\[5pt]
	(2^{n_1}, 2^{n_2-1}, 2^{n_2-1}, 2^{n_3}, \ldots, 2^{n_t}) & \text{if } t \text{ is odd and }r=1,\\[5pt]
	(2^{n_1-1}, 2^{n_1-1}, 2^{n_2}, 2^{n_3}, \ldots, 2^{n_t}) &\text{if } t \text{ is odd and }r>1.
	\end{cases}\]
	
	We observe that such a $g$ is an even permutation and therefore can be found in $P$. Moreover, we have that $\chi^\lambda(g)=0$ by the Murnaghan--Nakayama rule. It follows that if $\lambda\neq \lambda'$ then $\phi(g)=\chi^\lambda(g)=0$.
	Finally, if $\lambda=\lambda'$ then clearly $|h_{i,i}(\lambda)|$ is odd for all $i\in[t]$, hence we cannot have $h_{i,i}(\lambda)=2^{n_i}$ for all $i\in[t]$. Therefore $\phi(g)=\frac{1}{2}\chi^\lambda(g)=0$, by \cite[2.5.13]{JK}.
\end{proof}

\begin{thm}\label{thm:An23}
	Let $p\in\{2,3\}$ and let $n\ge 5$ be a natural number. Let $R$ be a Sylow $p$-subgroup of $\fA_n$. Then	$\fA_n$ either possesses a $p$-defect zero character or there is some  $\aut {\fA_n}$-invariant $\theta \in \irr {\fA_n}$ such that $[\theta_R, {\bf 1}_R]$ is not divisible by $p$ and $\theta(x)=0$ for some $x \in R$.
\end{thm}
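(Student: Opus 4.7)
The plan is to combine Theorem~B with Theorem~\ref{thm:nonvanishing} and to choose a $p$-block of $\fA_n$ carefully so that the character it produces is $\fS_n$-invariant. If $\fA_n$ admits a $p$-defect zero character, the conclusion is immediate; direct inspection confirms that both $\fA_5$ and $\fA_6$ have $p$-defect zero characters for $p\in\{2,3\}$, so we may assume $n\geq 7$ throughout, in which case $\aut{\fA_n}=\fS_n$. The overall strategy is then: select a non-maximal defect $p$-block $\bar{B}=B(\gamma,w)$ of $\fS_n$ with $w\geq 1$, let $B$ be the $p$-block of $\fA_n$ covered by $\bar{B}$, and apply Theorem~B to obtain $\theta\in\Irr_0(B)$ with $p\nmid[\theta_R,{\bf 1}_R]$. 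Since $B$ has non-maximal defect, $p\mid\theta(1)$, and Theorem~\ref{thm:nonvanishing} then forces $\theta$ to vanish on some element of $R$. What remains is to guarantee that $\theta=(\chi^\lambda)_{\fA_n}$ for some $\lambda\neq\lambda'$ (hence $\fS_n$-invariant), rather than $\theta=\phi^{\pm}_\lambda$ for some $\lambda=\lambda'$.

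For $p=2$ this invariance is automatic: for every $n\geq 7$, take $\bar{B}$ to be a non-maximal defect $2$-block (e.g. the block corresponding to the staircase $\gamma=(2,1)$ when $n$ is odd, or $\gamma=(3,2,1)$ when $n$ is even). By Lemma~\ref{lem:ht0-not-conj}, every height-zero character in a $2$-block of weight $w\geq 1$ is labelled by a non-self-conjugate partition, so the character $\theta$ furnished by the proof of Corollary~\ref{cor:B-An} automatically has the required form.

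For $p=3$ the key observation is that if $\gamma\neq\gamma'$ then $C_3(\lambda)=\gamma\neq\gamma'=C_3(\lambda')$ forces $\lambda\neq\lambda'$ for every $\lambda$ in $\bar{B}$. The task reduces to exhibiting, for each problematic $n\geq 7$, a non-self-conjugate $3$-core $\gamma$ with $n-|\gamma|$ a positive multiple of $3$ and with $|\gamma|$ strictly larger than the minimal $3$-core size in the residue class of $n\pmod 3$ (so that $\bar{B}=B(\gamma,(n-|\gamma|)/3)$ has non-maximal defect). A short case analysis on $n\pmod 3$ produces such a $\gamma$ for every problematic $n$ except $n=11$: for $n\equiv 0\pmod 3$ with $n\geq 9$ take $\gamma=(4,2)$ of size $6$; for $n\equiv 1\pmod 3$ with $n\geq 7$ take $\gamma=(3,1)$ of size $4$; and for $n\equiv 2\pmod 3$ with $n\geq 23$ take a non-self-conjugate $3$-core of size $14$, whose existence is an easy check via the $3$-abacus (the remaining small values $n\in\{14,17,20\}$ being already disposed of because $\fA_n$ admits a $3$-defect zero character in each case).

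The case $p=3$, $n=11$ must be handled by explicit computation, since the only non-self-conjugate $3$-cores of size $\equiv 2\pmod 3$ smaller than $11$ are $(2)$ and $(1,1)$, giving only maximal defect blocks. Here I would take $\bar{B}=B((4,2,1,1),1)$, whose three irreducible characters are indexed by $(4,2,1^5)$, $(4,3,3,1)$ and $(7,2,1,1)$, with $(4,2,1^5)'=(7,2,1,1)$. Combining the Murnaghan--Nakayama rule with Lemma~\ref{lem:iv} yields $[(\chi^{(4,2,1^5)})_{\bar{P}},{\bf 1}_{\bar{P}}]=14$, which is coprime to $3$; the character $\theta=(\chi^{(4,2,1^5)})_{\fA_{11}}$ is then $\fS_{11}$-invariant with $\theta(1)=594$ divisible by $3$, and Theorem~\ref{thm:nonvanishing} concludes the argument. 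The main obstacle is thus ensuring $\fS_n$-invariance in the $p=3$ case, which demands both a careful case analysis on $n\pmod 3$ and the isolated direct computation for $n=11$.
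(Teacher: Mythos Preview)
Your approach is correct and essentially identical to the paper's: both combine Theorem~B with Theorem~\ref{thm:nonvanishing}, choose the same $2$-cores $(2,1)$ and $(3,2,1)$ for $p=2$ (invoking Lemma~\ref{lem:ht0-not-conj}), and choose non-self-conjugate $3$-cores $(4,2)$ and $(3,1)$ for $n\equiv 0,1\pmod 3$ when $p=3$. Two small points: for $n\equiv 2\pmod 3$ the paper simply takes $\gamma=(6,4,2,1,1)$ and lets Theorem~B (applied to $\fS_n$, via Lemma~\ref{lem:bigcore}) handle every $n\geq 14$ uniformly, avoiding your separate defect-zero checks for $\{14,17,20\}$; and your case list for $n\equiv 2\pmod 3$ silently omits $n=8$, which should be added to the values disposed of via a $3$-defect zero character (e.g.\ coming from the $3$-core $(4,2,1,1)$). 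For $n=11$ the paper uses $(8,2,1)$ rather than your $(4,2,1^5)$, but both computations check out.
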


\begin{proof}
	From Theorem~\ref{thm:nonvanishing} we know that for $\theta \in \irr {\fA_n}$ the condition $\theta(x)=0$ for some $x \in R$ is equivalent to having that $p$ divides $\theta(1)$. 
	We also recall that for every $n\neq 6$, we have that $\theta$ is $\aut {\fA_n}$-invariant if and only if $\theta=(\chi^\lambda)_{\fA_n}$ for some partition $\lambda$ of $n$ such that $\lambda\neq \lambda'$. 

	We distinguish two cases, depending on the value of $p\in\{2,3\}$. 

	\noindent\textbf{(i) $p=2$:} If $n=6$ then both constituents of $(\chi^{(3,2,1)})_{\fA_6}$ are $2$-defect zero irreducible characters of $\fA_6$. Suppose now that $n\ne 6$. Let $\gamma$ be the following $2$-core partition:
	\[ \gamma=\begin{cases}
	(2,1) & \mathrm{if}\ n\ \text{is odd},\\
	(3,2,1) &\mathrm{if}\ n\ \text{is even}.\end{cases}\]
	Let $2w=n-|\gamma|$ and let $2w=2^{n_1}+2^{n_2}+\cdots+2^{n_t}$ be its binary expansion, where $t\in\mathbb{N}$ and $n_1>n_2>\cdots >n_t\geq 1$.
	Let $B:=B(\gamma, w)$ be the corresponding $2$-block of $\fS_n$. By Lemma~\ref{lem:ht0-not-conj}, if $\chi^\lambda\in\Irr_0(B)$ then $\lambda\neq \lambda'$. 
	Considering as in the proof of Corollary~\ref{cor:B-An} the virtual character $V^{\gamma}[2^{n_1},2^{n_2},\ldots,2^{n_t}]$, we deduce that there exists $\lambda\in\mathcal{P}(n)$ labelling a character in $\Irr_0(B)$ such that the sum of Sylow branching coefficients $Z^\lambda+Z^{\lambda'}$ is odd. Since $|\gamma|\geq 3$ we also have that $\chi^\lambda(1)$ is even, by Lemma \ref{lem:bigcore}. 
	We conclude that $\theta=(\chi^\lambda)_{\fA_n}$ is an irreducible character of $\fA_n$ with the desired properties, since $[\theta_R,{\bf 1}_R] = Z^\lambda+Z^{\lambda'}$.

	\smallskip

	\noindent\textbf{(ii) $p=3$:} Direct verification shows that the statement holds for $n\in\{5,8,11\}$. More precisely, the alternating groups $\fA_5$ and $\fA_8$ admit $3$-defect zero irreducible characters. On the other hand, $\theta=(\chi^{(8,2,1)})_{\fA_{11}}$ is such that $3\mid\theta(1)$ and $3\nmid[\theta_R,{\bf 1}_R]$ where $R\in\syl{3}{\fA_{11}}$.
	
	Now suppose that $n\notin\{5,8,11\}$. Let $\gamma$ be the following $3$-core partition:
	\[ \gamma=\begin{cases}
	(4,2) & \mathrm{if}\ n\equiv 0\ (\mathrm{mod}\ 3),\\
	(3,1) & \mathrm{if}\ n\equiv 1\ (\mathrm{mod}\ 3),\\
	(6,4,2,1,1) &\mathrm{if}\ n\equiv 2\ (\mathrm{mod}\ 3).\end{cases}\]
	Let $B$ be the $3$-block of $\fS_n$ labelled by $\gamma$. Since $\gamma\neq \gamma'$ we have that every irreducible character in $\Irr_0(B)$ is labelled by a non-self-conjugate partition. By Theorem B there exists $\chi^\lambda\in\Irr_0(B)$ such that $Z^\lambda$ is coprime to $3$. Since $|\gamma|\geq 4$ we also have that $3$ divides $\chi^\lambda(1)$, by Lemma \ref{lem:bigcore}. We conclude that $\theta=(\chi^\lambda)_{\fA_n}$ is an irreducible character of $\fA_n$ with the desired properties. 
\end{proof}

\medskip

\noindent\textbf{Acknowledgments.}
Part of this work was done while the fourth author was visiting the first at the University of Florence
supported 
by the Spanish National Research Council through the ``Ayuda
extraordinaria a Centros de Excelencia Severo Ochoa'' (20205CEX001).
The third author was supported by ERC Consolidator Grant 647678. The fourth author is partially supported by the Spanish Ministerio de Ciencia e Innovaci\'on
PID2019-103854GB-I00 and FEDER funds.
We thank Gabriel Navarro and Thomas Breuer for helping us checking Theorem 2.3 for some sporadic groups.  Moreover, we would like to
thank Gunter Malle for comments on a previous version.

\end{document}